\newlength{\stefan}
\DeclareMathSymbol{\subsetneq}{\mathord}{AMSb}{"26}
\newtheorem{lemma}{Lemma}[section]
\newtheorem{theorem}[lemma]{Theorem}
\newtheorem{proposition}[lemma]{Proposition}
\newtheorem{corollary}[lemma]{Corollary}
\theoremstyle{definition}
\newtheorem{definition}[lemma]{Definition}
\newtheorem{example}[lemma]{Example}
\newtheorem{remark}[lemma]{Remark}
\newtheorem{question}[lemma]{Question}
\newcommand{\lp}{\longrightarrow}
\newcommand{\mb}{\mathbb}
\newcommand{\F}{\mb{F}}
\newcommand{\B}{\textup{B}}
\newcommand{\C}{\mb{C}}
\newcommand{\R}{\mb{R}}
\newcommand{\Z}{\mb{Z}}
\newcommand{\N}{\mb{N}}
\newcommand{\Q}{\mb{Q}}
\newcommand{\I}{\mathfrak{i}}
\newcommand{\J}{\mathfrak{j}}
\newcommand{\desda}{\Longleftrightarrow}
\renewcommand{\ker}{\operatorname{ker}}
\renewcommand{\deg}{\operatorname{deg}}
\newcommand{\im}{\textup{Im}}
\newcommand{\GL}{\operatorname{GL}}
\newcommand{\GA}{\operatorname{GA}}
\newcommand{\perm}{\operatorname{Perm}}
\newcommand{\BB}{\mathcal{B}}
\newcommand{\Int}{\operatorname{Int}}
\newcommand{\kar}{\operatorname{char}}
\newcommand{\Maps}{\operatorname{Maps}}
\newcommand{\INC}{\textsc{inc}}
\newcommand{\MA}{\operatorname{MA}}
\title{Triangular polynomial maps in characteristic $p$}
\author{
Stefan Maubach\\ \small Jacobs University Bremen\\\small 
Bremen, Germany \\ \small s.maubach@jacobs-university.de}
\begin{document}

\maketitle

\begin{abstract} 
This paper came to existence out of the desire to understand iterations of strictly triangular polynomial maps over finite fields. This resulted in two connected results:

First, we give a generalization of $\F_p$-actions on $\F_p^n$ and their description by ``locally iterative higher derivations'', namely $\Z$-actions on $\F_p^n$ and show how to describe them by what we call ``$\Z$-flows''. 
We prove equivalence between locally finite polynomial automorphisms (LFPEs) over finite fields and $\Z$-flows over finite fields. We elaborate on $\Z$-flows of strictly triangular polynomial maps.

Second, we describe how one can efficiently evaluate iterations of triangular polynomial permutations on $\F_p^n$ which have only one orbit. 
We do this by determining the equivalence classes in the triangular permutation group of such elements. We show how to conjugate them all to the map $z\lp z+1$ on the ring $\Z/p^n\Z$ (which is identified with $\F_{p^n}$), making iterations trivial.  An application in the form of fast-forward functions from cryptography is given.
\end{abstract}

AMS classification: 14R20, 20B25, 37P05, 11T06, 12E20

Keywords: polynomial automorphism, polynomial map, triangular polynomial automorphism, permutation group, p-sylow group, locally finite polynomial endomorphism, iterations of functions, fast-forward function. 

%37P05 iterations of rational or polynomial maps
%11T06 Number theory: polynomials
%12E20 Field theory and polynomials , finite fields
%94A60 Cryptography
%20B20 Permutation groups

\section{Preliminaries}

\subsection{Notations and definitions}

We will use the letter $k$ for a field. $p$ will be used exclusively for a prime integer. 
A polynomial endomorphism, or polynomial map, is an element $F=(F_1,\ldots,F_n)\in (k[X_1,\ldots,X_n])^n)$, which induces a map
$F:k^n\lp k^n$. Polynomial maps can be composed: $F\circ G=(F_1(G_1,\ldots, G_n),\ldots, F_n(G_1,\ldots, G_n))\in (k[X_1,\ldots,X_n])^n)$.  The polynomial map $I:=(X_1,\ldots, X_n)$ acts as identity with respect to composition, and given $n,k$ this defines a monoid, denoted $\MA_n(k)$. A polynomial map $F\in \MA_n(k)$ is invertible if there exists $G\in \MA_n(k)$ such that $F\circ G=I$; 
the set of invertible polynomial maps, or polynomial automorphisms, is denoted $\GA_n(k)$ (as a generalization of $\GL_n(k)$). 

\subsection{Introduction}

When generalizing the concept of algebraic $(k,+)$ actions on $k^n$ where $k$ is a field of characteristic zero, to fields of characteristic $p$, one tends to (obviously) go to $(k,+)$ actions on $k^n$. These then automatically have order $p$. This makes the generalization, though seemingly natural in some way, restrictive. For example, a common class of additive group actions is those induced by strictly triangular polynomial maps, that is, maps of the form $(X_1+g_1, \ldots, X_n+g_n)$, where $g_i\in k[X_1,\ldots, X_{i-1}]$. In characteristic zero all these maps can be embedded into a unique algebraic additive group action $\varphi: (k,+)\times k^n\lp k^n$ such that $\varphi(1,X_1,\ldots,X_n)$ is exactly this map. Analytically speaking, they are the ``time one-maps of a $(k,+)$ flow on $k^n$''. However, in characteristic $p$ they do not always have order $p$, so they cannot be part of a $(k,+)$-action.

To give an example, if $F=(X+Y+Z, Y+Z, Z)$ in characteristic zero, then the additive group action becomes 
\[ (t, (x,y,z))\lp (x+ty+\frac{1}{2}(t^2+t)z, y+tz, z).\]
Indeed, one can check that $(s,(t, (x,y,z))=(s+t,(x,y,z))$ and that $(1,(x,y,z))=F(x,y,z)$. If one considers the same linear map for the case that $\kar(k)=2$,  then the order of the map is 4, making it not part of a $(k,+)$-action on $k^3$. Another indication is that the above formula requires division by 2.

Let us elaborate on the $\kar(k)=0$ case, as a motivational background. For details we refer to \cite{FreudenburgBoek} and \cite{Essenboek}, we just present some of the basics very briefly.
A locally nilpotent derivation on a $k$-algebra $R$  is a $k$-linear map satisfying the Lebniz rule $D(fg)=fD(g)+D(f)g$ for all $f\in R$ (making it a derivation), and the property that for any $f\in R$ there exists $n\in \N$ such that $D^n(f)=0$ (making it locally nilpotent).
Given a locally nilpotent derivation $D$ on $k[X_1,\ldots,X_n]$ one can define a ring homomorphism
\[ \begin{array}{rl} \varphi_T: R\lp& R[T]\\
f\lp & \exp(TD)(f).
\end{array} \]
Since $D$ is locally nilpotent, the right hand side is a polynomial in $T$. 
Now one can prove that $\exp(SD)(\exp(TD)(f))=\exp((S+T)D)(f)$, and thus this ring homomorphism describes 
an additive group action $(t,x)\lp \exp(tD)(x)$. 
For example, if we take $D=(Y+\frac{1}{2}Z)\frac{\partial}{\partial X} + Z\frac{\partial}{\partial Y}$ on $k[X,Y,Z]$ then we get exactly the additive group action described above: $(t, (x,y,z))\lp (x+ty+\frac{1}{2}(t^2+t)z, y+tz, z)$. 
The other way around, given such an additive group action and corresponding ring homomorphism $\varphi_T: R\lp R[T]$, then one can define a locally nilpotent derivation by $f\lp (\frac{\partial}{\partial T} \varphi(f))\arrowvert_{T=0}$.

In characteristic zero it is not that hard to prove  that a strictly triangular polynomial map $F$ is the exponential of a unique locally nilpotent derivation: $F=\exp(D)$. (See prop. 2.1.13 in \cite{Essenboek}) Thus, such a map is uniquely connected with an additive group action and ring homomorphism $F_T: k[X_1,\ldots,X_n]\lp k[X_1,\ldots,X_n][T]$.  Let us denote $F_f=(F_T)\arrowvert_{T=f}$ for any $f$ for which this makes sense.\footnote{Meaning that $D(f)=0$, i.e. $f$ is in the kernel of $D$.} Then, in particular if $m\in \N$, $F_m=F^m=(F\circ F\circ\cdots \circ F)$. Hence, one has a polynomial parametrization of iterates of $F$, and given such a polynomial parametrization $F_T$ it becomes rather easy and efficient to compute iterates $F^m(v)$ where $v\in k^n$. 

In the case $\kar(k)=p$ the situation is widely different. First of all, one has the beforementioned discrepancy that not all strictly triangular polynomial maps are connected to additive group actions (not even {\em linear} strictly upper triangular maps are of this type). Secondly, the concept of locally nilpotent has to be replaced by ``locally iterative higer derivations'' which do not have many of the nice properties of locally nilpotent derivations (they are not determined by their result on the variables $X_1,\ldots,X_n$, for one). 

We solve this issue by providing a way to parametrize iterates in a different way, using integer-preserving polynomials (see section \ref{1var}) and in introducing $\Z$-flows (section \ref{vier}). This generalizes the concept of locally iterative higher derivations and is in our opinion more natural (it does not exclude certain unipotent linear maps, for one). 

Where, in general, providing a $\Z$-flow for a polynomial map $F$ gives a way to compute iterates  $F^m(v)$ somewhat efficiently, in  the case of finite fields we provide a different way to compute iterates of  $F^m(v)$. We restrict to the case where $F$ is a strictly upper triangular polynomial map of maximal order   (section \ref{tpm}). We achieve this (in part) by determining the equivalence class of such maps. In section \ref{vijf} we make a short computation on the computational weight of such iterations, and in section \ref{zes} we briefly indicate a link to fast-forward functions from cryptography.

\tableofcontents
\setcounter{tocdepth}{3}

%\color{blue}
\section{Triangular polynomial maps}

\label{tpm}

\subsection{The strictly  triangular permutation  group $\BB_n(\F_p)$}

Below, write $A_n:=\F_p[X_1,\ldots,X_n]$, and write $\I_n$ for the ideal in $A_n$ generated by the $X_i^p-X_i$. (We write $A$ if $n$ is clear.) Write $x_i:=X_i+\I_n$, and write $R_n:=\F_p[x_1,\ldots,x_n]=A_n/\I_n$.
Note that we can see $R_n$ as a subset of $R_{n+1}$ and identify $X_i+\I_n$ with $X_i+\I_{n+1}$, removing any ambiguity here. We will also write $\I$ in stead of $\I_n$ if $n$ is clear (or unimportant).

Note that the map $p: A_n\lp \Maps(\F_p^n,\F_p)$ satisfies $\I_n\subseteq \ker(p)$; but since $p$ is surjective, and
$\#(A_n/\I_n)=\#\Maps(\F_p^n,\F_p)$, we get that $\I_n=\ker(p)$. We thus can naturally identify $R_n$ with $\Maps(\F_p^n,\F_p^n)$. Thus, the natural map  $\pi:  (A_n)^n\lp \Maps(\F_p^n,\F_p^n)$ has kernel $(\I_n)^n$ (or $\I^n$), being a subset of $(A_n)^n$ (or $A^n$).

In this article, a polynomial map is an element $F\in (A_n)^n$. Each $F$ induces a map $\F_p^n\lp \F_p^n$,
and the above map $\pi$ is exactly the map assigning to each polynomial map its element in $\Maps(\F_p^n,\F_p^n)$. 
Hence, we may see $\pi(F)$ as an element of $(R_n)^n$, and since $\pi$ is surjective, these elements coincide one to one with the elements of 
$\Maps(\F_p^n,\F_p^n)$. So it means that we can write maps  like $(x_1^2+x_2, x_2+1+x_1)\in \Maps(\F_p^n, \F_p^n)$. 
The set of elements in $\Maps(\F_p^n,\F_p^n)$ which are isomorphisms we denote, as usual, by $\perm(\F_p^n)$. 

We define a polynomial map to be {\em triangular} if $F=(F_1,\ldots, F_n)$, where $F_i\in A_i=\F_p[X_1, X_{2},\ldots, X_i]$.\footnote{Note 
that often the definition is to let $F_i\in \F_p[X_i,\ldots, X_n]$ (and in fact we are used to it ourselves) but for this article it turned out 
to be more convenient to choose the definition in the text; in this case, some induction proofs have easier indexes). }
Similarly, $F$ is called {\em strictly triangular} if $F_i-X_i\in A_{i-1}=\F_p[X_1,\ldots,X_{i-1}]$. (Note that strictly triangular maps are automatically automorphisms; triangular maps in principle are not. In the latter case, invertibility is equivalent to 
$F_i-\lambda_i X_i\in
\F_p[X_1,\ldots,X_{i-1}]$ for some $\lambda_i\in\F_p$ for each $1\leq i\leq n$. )
We state that an element in $\Maps(\F_p^n,\F_p^n)$ is strictly triangular if it is the  image of a strictly triangular element in $A_n^n$.

The set of triangular polynomial automorphisms forms a subgroup (see \cite{FreudenburgBoek} section 3.6) denoted by $\B_n(\F_p)$, and the set of strictly triangular polynomial maps form a subgroup of $\B_n(\F_p)$, denoted by $\B_n^0(\F_p)$ (see \cite{hanoi} for the reasoning behind the naming of these groups). To be precise:
\[ B_n(\F_p)=\{(\lambda_1X_1+f_1,\ldots, \lambda_nX_n+f_n) ~|~ \lambda_i\in \F_p, f_i\in \F_p[X_1,\ldots, X_{i-1}]\}, \]
\[ B_0(\F_p)=\{(X_1+f_1,\ldots, X_n+f_n) ~|~  f_i\in \F_p[X_1,\ldots, X_{i-1}]\}. \]

We have $\pi(\GA_n(\F_p))\subseteq \perm(\F_p^n)$ (see \cite{SM03, MW09,  MW11} on the image of this group). Now one can also define the groups $\B_{n-m}(A_m), \B_{n-m}^0(A_m)$ and embed them naturally  into $
\B_n(\F_p), \B_n^0(\F_p)$.

\begin{definition} We will denote  $\pi(\B_n^{0}(\F_p))$ by $\BB_n(\F_p)$. 
We will call this group the strictly triangular permutation group.
\end{definition}

Elements $\sigma\in \BB_n(\F_p)$ thus have a unique representation of the form 
\[ \sigma=(x_1+g_1, x_2+g_2(x_1), \ldots, x_n+g_n(x_1,\ldots,x_{n-1})) \]
where we assume that $\deg_{x_i}(g_j)\leq p-1$ for each $1\leq i<j\leq n$. 
The group multiplication is the induced composition of polynomial automorphisms.
We will write $e=\pi(I)\in \perm(\F_p^n)$.

We define
\[ \BB_{n-m}(R_m):=\{ (x_1,x_2,\ldots, x_m, x_{1+m}+g_{1+m},\ldots, x_n+g_n) ~|~g_i\in \F_p[x_1,\ldots, x_{i-1}]\} \]
as the subgroup of $\BB_n(\F_p)$ fixing the first $m$ coordinates of $\F_p^n$.
Note that there exists a natural map $\B^0_{n-m}(A_m)\lp \BB_{n-m}(R_m)$ induced by the map $A_m\lp R_m$ and sending $(X_1+g_1, \ldots, X_{n-m}+g_{n-m})$ where $g_i\in \F_p[X_1,\ldots, X_{m+i-1}]$ to 
$(x_1,\ldots, x_{m}, x_{m+1}+g_1(x_1,\ldots,x_{m}), \ldots, x_n+g_{n-m}(x_1,\ldots, x_{n-1}))$.

We start with a few generalities/trivialities on elements of $\BB_n$. In particular, $\BB_n(\F_p)$ is a $p$-sylow subgroup of $\perm(\F_p^n)$.

\begin{lemma} \label{simpel}
Let $\sigma \in \BB_n(\F_p)$. Then
\begin{enumerate} \renewcommand{\labelenumi}{\roman{enumi}}
\item$ \BB_{n-m} (R_m)\lhd  \BB_n(\F_p) .$
\item$ \BB_{n-m}(R_m)/\BB_{n-m-k}(R_{m+k})\cong \BB_k(R_m). $ In particular, $\BB_{n-m}(R_m)/\BB_{n-m-1}(R_{m+1})\cong \BB_1(R_m)$, which is isomorphic with the group $<R_m,+>$.
\item $\BB_n(\F_p)\cong \BB_{m}(\F_p)\ltimes \BB_{n-m}(R_m)$.
\item  If $\sigma \in \BB_{n-m}(R_m)$, then $\sigma^p\in \BB_{n-m-1}(R_{m+1})$. 
\item If $\sigma \in \BB_n(\F_p)$, then $\sigma^{p^n}=e$.
\item Any cycle in $\sigma\in \BB_n(\F_p)$ has order $p^i$ for some $i$. 
\item $ \#\BB_{n-m} (R_m)=p^{
%\left(
\frac{p^n-p^{m}}{p-1}
%\right)
}.$
In particular, $\BB_n(\F_p)$ is a $p$-sylow subgroup of $\perm(\F_p^n)$. 

\item If $\gcd(m,p)=1$, then for $\sigma\in \BB_n(\F_p)$ there exists $\tau\in \BB_n(\F_p)$ such that $\tau^m=\sigma$. 
\end{enumerate}
\end{lemma}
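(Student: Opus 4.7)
The plan is to prove the eight statements in an order that makes each a consequence of the structural behaviour of strictly triangular maps, with the key technical fact being (iv), from which (v) and (vi) fall out immediately.

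First I would handle (i), (ii), (iii) as purely structural observations. For (i), note that every $\tau \in \BB_n(\F_p)$ preserves the subring $R_m \subseteq R_n$, because $\tau(x_i) = x_i + g_i$ with $g_i \in \F_p[x_1,\ldots,x_{i-1}] \subseteq R_m$ for $i \leq m$, and the same holds for $\tau^{-1}$; hence if $\sigma \in \BB_{n-m}(R_m)$ fixes $R_m$ pointwise, then $\tau\sigma\tau^{-1}(x_i) = \tau(\tau^{-1}(x_i)) = x_i$ for $i \leq m$. For (ii), the truncation $(x_1,\ldots,x_m, x_{m+1}+g_{m+1},\ldots,x_n+g_n) \mapsto (x_1,\ldots,x_m, x_{m+1}+g_{m+1}, \ldots, x_{m+k}+g_{m+k})$ is a surjective homomorphism onto $\BB_k(R_m)$ with kernel $\BB_{n-m-k}(R_{m+k})$; the case $k=1$ yields $\BB_1(R_m) \cong \langle R_m,+\rangle$, since composition of two maps $x_{m+1}+g$ and $x_{m+1}+h$ gives $x_{m+1}+g+h$. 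For (iii), embed $\BB_m(\F_p) \hookrightarrow \BB_n(\F_p)$ by appending $(x_{m+1},\ldots,x_n)$; any $\sigma$ factors uniquely as $\alpha\beta$ with $\alpha$ in that image and $\beta \in \BB_{n-m}(R_m)$, giving the semidirect decomposition via (i).

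The heart of the lemma is (iv). Writing $\sigma = (x_1,\ldots,x_m, x_{m+1}+g_{m+1},\ldots)$ with $g_{m+1} \in R_m$, and using that $\sigma$ restricts to the identity on $R_m$, a short induction gives $\sigma^k(x_{m+1}) = x_{m+1} + k\, g_{m+1}$; in characteristic $p$ this forces $\sigma^p(x_{m+1}) = x_{m+1}$, so $\sigma^p \in \BB_{n-m-1}(R_{m+1})$. From this (v) is immediate by iteration: $\sigma \in \BB_n(\F_p) = \BB_n(R_0)$ gives $\sigma^{p^j} \in \BB_{n-j}(R_j)$, and $\BB_0(R_n) = \{e\}$. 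Statement (vi) is then trivial, since any cycle length divides the order of $\sigma$, which divides $p^n$.

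For (vii) I would count directly: an element of $\BB_{n-m}(R_m)$ is determined by the tuple $(g_{m+1},\ldots,g_n)$ with $g_{m+i} \in \F_p[x_1,\ldots,x_{m+i-1}]$ in normal form (each $x_j$ of degree at most $p-1$), giving $p^{p^{m+i-1}}$ choices for each factor, and the total is
\[ p^{\sum_{i=1}^{n-m} p^{m+i-1}} \;=\; p^{(p^n - p^m)/(p-1)}. \]
For the Sylow claim, Legendre's formula gives that the exact power of $p$ dividing $(p^n)! = |\perm(\F_p^n)|$ is $\sum_{i \geq 1} \lfloor p^n/p^i \rfloor = (p^n-1)/(p-1)$, which matches $|\BB_n(\F_p)|$ (the case $m=0$). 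Finally, (viii) is the standard $p$-group argument: by (v), $|\sigma|$ divides $p^n$, and $\gcd(m,p)=1$ gives $\gcd(m,|\sigma|)=1$, so there is $\ell$ with $\ell m \equiv 1 \pmod{|\sigma|}$; then $\tau := \sigma^{\ell}$ satisfies $\tau^m = \sigma^{\ell m} = \sigma$. The main obstacle, such as it is, lies in keeping the indexing convention straight and verifying at each step that the degree-at-most-$p-1$ normal form is preserved under composition in $R_n$; once one is comfortable that $R_m$ is pointwise fixed by $\BB_{n-m}(R_m)$, the computation in (iv) is the only real content and everything else is bookkeeping or standard $p$-group arithmetic.
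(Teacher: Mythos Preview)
Your proposal is correct and follows essentially the same approach as the paper: the structural items (i)--(iii) are handled via the projection/truncation homomorphisms and the splitting embedding, (iv) is the one real computation (you do it directly as $\sigma^k(x_{m+1})=x_{m+1}+kg_{m+1}$, while the paper phrases it as the quotient $\BB_{n-m}(R_m)/\BB_{n-m-1}(R_{m+1})\cong\langle R_m,+\rangle$ having exponent $p$, which is the same calculation), and (v)--(viii) are deduced identically. Your explicit Legendre-formula check for the Sylow claim in (vii) is a detail the paper leaves implicit.
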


\begin{proof}
{\em (i)} Let us consider the natural map $\BB_n(\F_p)\lp \BB_m(\F_p)$ (projection onto the first $m$ coordinates). 
Then it is easy to see that the kernel of this group homomorphism contains, and is contained in, $\BB_{n-m}(R_m)$, so the result follows.\\
{\em (ii)} A proof sketch to save space: modding out $\BB_{n-m-k}(R_{m+k})$ removes the last $n-m-k$ coordinates and leaves the first $m+k$ coordinates intact. To understand $\BB_1(R)$ for a ring $R$, note that elements are of the form $(x_1+r)$ and that $(x_1+r)\circ(x_1+s)=(x_1+r+s)$.\\
{\em (iii)} We have the natural stabilization embedding $\BB_{m}(\F_p)\lp \BB_n(\F_p)$, which is indeed a section for the 
quotient map $\BB_{n}(\F_p)\lp \BB_n(\F_p)/\BB_{n-m}(R_m)\cong \BB_m(\F_p)$, which proves the result.\\
{\em (iv)} Any element in $<R_m,+>$ has order $p$, hence if $\sigma \in \BB_{n-m}(R_m)$ then $\sigma+\BB_{n-m-1}(R_{m+1})\in \BB_{n-m}(R_m)/\BB_{n-m-1}(R_{m+1})$ has order $p$; hence $\sigma^p\in \BB_{n-m-1}(R_{m+1})$. \\
{\em (v)} Applying (iii) $n$ times, yields that if $\sigma \in \BB_n(\F_p)=\BB_n(R_0)$, then $\sigma^{p^n}\in \BB_{0}(R_n)$ which is the trivial group. \\
{\em (vi)} follows easily from (iv).\\
{\em (vii)}: The number of coefficients of $g_i$ is $p^{i-1}$, as it is a polynomial in the variables $x_1,\ldots, x_{i-1}$ bounded in each variable by degree $p-1$. Hence, an element in $\BB_{n-m}(R_m)$ is determined by  
$p^m+p^{m+1}+\ldots+p^{n-1}=p^{m}\frac{p^{n-m}-1}{p-1}$ coefficients. The stated formula follows since each coefficient can take $p$ values.\\
{\em (viii)} Since $(m,p^n)=1$ there exist $a,b\in \Z$ such that $am+bp^n=1$. Pick $\tau:=\sigma^a$, then $\tau^m=\sigma^{am}=\sigma$.
\end{proof}

\begin{remark}
In respect to lemma \ref{simpel} part (vii) we mention the papers of  Kaluznin from 1945 and 1947 \cite{kal1,kal2} which were motivated by finding the $p$-sylow subgroups of $\perm(N)$ where  $N\in \N$, $N\geq 1$. 
His description of the $p$-sylow groups of $\perm(p^n)$ is exactly the triangular permutation group. 
This example of a $p$-sylow group resembles the following well-known example: let $B:=\{I+N~|~N\in Mat_n(\F_p)$ strictly upper triangular$\}$ be the set of unipotent upper triangular matrices in $\GL_n(\F_p)$. Then $B$ is a $p$-sylow subgroup of $\GL_n(\F_p)$.
In fact, $B=\GA_n(\F_p)\cap \B_n(\F_p)$. 
\end{remark}

\subsection{Maximal orbit maps}

\begin{definition} We define $\sigma \in \BB_n(\F_p)$ being {\em of maximal orbit} if $\sigma$ consists of one permutation cycle of length $p^n$. 
\end{definition}

Next to the theoretical interest, our motivation for studying maximal orbit maps to the exclusion of arbitrary  maps, is for their possible applications in cryptography: if $\sigma \in \BB_n(\F_p)$ is  of maximal orbit which is unknown to an adversary, then choosing a random $m\in \Z$ and publishing $v\in \F_p^n, \sigma^m(v)\in \F_p^n$ gives absolutely no information on $\sigma$ nor $m$, as long as both stay hidden from the adversary (a broadcasted pair $(v,\sigma^m(v))$ is indistinguisheable from a random pair $(v,w)\in (\F_p^n)^2$). If $\sigma$ is not of maximal orbit, then $(v, \sigma^m(v))$ obviously does leak information on $\sigma$: it shows that $v$ and $\sigma^m(v)$ are in the same orbit (which excludes some maps).

The reason that we do not generalize the results of this section to other finite fields (i.e. finite extensions of $\F_p$) is that there exist no elements of maximal orbit in $\BB_n(\F_{p^m})$ if $m\geq 2$. (One can prove lemma \ref{simpel} part (i) for $\F_{p^m}$ for all $m$, so the longest possible orbit is $p^n$ instead of $p^{nm}$.)
We quote theorem 6 from \cite{Ostafe}:\footnote{The theorem in \cite{Ostafe} is more general, we restricted it to the case we need.}

\begin{theorem} \label{T3.2} $\sigma=(x_1+g_1,\ldots,x_n+g_n)$ is of maximal orbit if and only if the coefficient $c_i$ of $x_{1}^{p-1}\cdots x_{i-1}^{p-1}$ in $g_i$ is nonzero for each $1\leq i \leq n$.
Furthermore,  if $\sigma$ is of maximal orbit, then 
\[ \sigma^{p^{n-1}}(\tilde{\alpha},a)=(\tilde{\alpha},a+(-1)^{n-1}c_n ) \] for each $a\in \F_p,\tilde{\alpha}\in \F_p^{n-1}$.
\end{theorem}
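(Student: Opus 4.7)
The plan is to prove the theorem by induction on $n$. The base case $n=1$ is immediate: $\sigma=(x_1+c_1)$ is a single cycle of length $p$ precisely when $c_1\neq 0$, and in that case $\sigma^{p^0}=\sigma$ sends $a$ to $a+c_1=a+(-1)^0c_1$.

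For the inductive step, write $\sigma'=(x_1+g_1,\ldots,x_{n-1}+g_{n-1})$ for the projection of $\sigma$ onto the first $n-1$ coordinates. Iterating Lemma~\ref{simpel}(iv) starting from $\sigma\in\BB_n(R_0)$ yields $\sigma^{p^{n-1}}\in\BB_1(R_{n-1})$, so
\[
\sigma^{p^{n-1}}(\tilde{x},x_n)=\bigl(\tilde{x},\,x_n+h(\tilde{x})\bigr)
\]
for some $h\in R_{n-1}$. Unwinding the iteration $\sigma^{k+1}=\sigma\circ\sigma^k$ coordinate-wise gives directly
\[
h(\tilde{x})=\sum_{j=0}^{p^{n-1}-1} g_n\bigl((\sigma')^j(\tilde{x})\bigr),
\]
so the problem reduces to summing $g_n$ along a $\sigma'$-orbit.

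I would first argue that $\sigma$ of maximal orbit forces $\sigma'$ of maximal orbit: a $p^n$-cycle in $\F_p^n$ must project onto all of $\F_p^{n-1}$, so the $\sigma'$-orbit of any $\tilde\alpha$ has length $\ge p^{n-1}$, hence equals $p^{n-1}$ by Lemma~\ref{simpel}(vi). The inductive hypothesis then yields $c_1,\ldots,c_{n-1}\neq 0$. Moreover, once $\sigma'$ has maximal orbit, the multiset $\{(\sigma')^j(\tilde{x}):0\le j<p^{n-1}\}$ equals $\F_p^{n-1}$ as a set, so the displayed sum collapses to
\[
h(\tilde{x})=\sum_{\tilde{y}\in\F_p^{n-1}}g_n(\tilde{y})=:d,
\]
independent of $\tilde{x}$.

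Expanding $g_n=\sum_{\vec{\ell}}a_{\vec{\ell}}\,x_1^{\ell_1}\cdots x_{n-1}^{\ell_{n-1}}$ with $0\le\ell_i\le p-1$ and using the standard identity
\[
\sum_{y\in\F_p}y^\ell=\begin{cases}-1 & \text{if }\ell=p-1,\\ 0 & \text{if }0\le\ell<p-1,\end{cases}
\]
only the monomial $x_1^{p-1}\cdots x_{n-1}^{p-1}$ contributes, giving $d=(-1)^{n-1}c_n$; this establishes the stated formula for $\sigma^{p^{n-1}}$. To finish the equivalence, note that under the standing assumption that $\sigma'$ is of maximal orbit, $\sigma$ is of maximal orbit iff its order equals $p^n$, which (since the order divides $p^n$ by Lemma~\ref{simpel}(v) and is a power of $p$ by Lemma~\ref{simpel}(vi)) happens iff $\sigma^{p^{n-1}}\neq e$, i.e.~$d\neq 0$, i.e.~$c_n\neq 0$. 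The main technical ingredient is the character-sum evaluation that isolates the top monomial; the only real subtlety is handling the implication ``$\sigma$ maximal $\Rightarrow$ $\sigma'$ maximal'' cleanly via the fiber-counting observation above, which powers the use of the induction hypothesis.
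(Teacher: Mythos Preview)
The paper does not give its own proof of this theorem; it is quoted verbatim from Ostafe \cite{Ostafe} (``We quote theorem 6 from \cite{Ostafe}''). So there is no argument in the paper to compare against.

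Your proof is correct and self-contained. The induction on $n$, the telescoping identity
\[
\sigma^{k}(\tilde{x},x_n)=\bigl((\sigma')^{k}(\tilde{x}),\,x_n+\textstyle\sum_{j=0}^{k-1}g_n((\sigma')^{j}(\tilde{x}))\bigr),
\]
the reduction of the last sum (once $\sigma'$ is of maximal orbit) to $\sum_{\tilde y\in\F_p^{n-1}}g_n(\tilde y)$, and the power-sum evaluation isolating the top monomial are all valid and cleanly organized. The fiber-counting argument that ``$\sigma$ maximal $\Rightarrow$ $\sigma'$ maximal'' is the right way to feed the induction hypothesis.

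One step you might make a touch more explicit: you assert ``$\sigma$ is of maximal orbit iff its order equals $p^n$'' and then justify only the equivalence ``order $=p^n \Leftrightarrow \sigma^{p^{n-1}}\neq e$''. The missing link is immediate from Lemma~\ref{simpel}(vi): since every cycle length of $\sigma$ is a power of $p$, the order of $\sigma$ is the \emph{maximum} cycle length; if that maximum is $p^n$ there is a cycle of length $p^n=\#\F_p^n$, hence a single cycle. (Alternatively, under your standing hypothesis that $\sigma'$ is of maximal orbit, every $\sigma$-orbit surjects onto $\F_p^{n-1}$ and so has size $p^{n-1}$ or $p^n$.) With that one sentence added, the argument is complete.
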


So, the above theorem \ref{T3.2} gives a clear citerion in the coefficients appearing in $\sigma$  
for when an element in $\BB_n(\F_p)$ is of maximal orbit. 
Now, note that lemma \ref{simpel} part  (vii) actually tells one that it is possible to find an ``$m$-th root'' of any $\sigma\in \BB_n(\F_p)$ when $(m,p)=1$. For $m=p$, however, it will not be always possible. In particular, 
if $\sigma$ is of maximal orbit, it is not possible. This induces a few questions we were unable to solve satisfactorily like  theorem \ref{T3.2} does: 

\begin{question}~\\
(1) Can one recognise from the coefficients in $\sigma\in \BB_n(\F_p)$ if $\sigma$ is a $p$-th power of another map in $\BB_n(\F_p)$?
In particular, what is $\BB_{n-1}(R_1)/G$ where $G:=<\sigma^p ~|~ \sigma \in \BB_n(\F_p)>$?
\\
(2) Can one recognise from the coefficients in $\sigma \in \BB_n(\F_p)$ if $\sigma$ is a $p^i$-th power of a map of maximal orbit?\footnote{Added in proof: the author received a solution to this question in personal communication by Andreas Maurischat.}
\end{question}

Note that $G$ in (1) is a fully invariant subgroup of $\BB_n(\F_p)$, and in particular normal, see \cite{groepenboek} page 28.

There are some necessary requirements, like in (1) $\sigma$ must be in $\BB_{n-1}(R_1)$ and (consequently) in (2) $\sigma \in \BB_{n-i}(R_i)$, but these are by no means sufficient: $(x_1,x_2+x_1)$ is not a $p$-th power while $(x_1, x_2+1)$ is.

\subsection{Classification of maximal orbit maps}

We will consider the issue that if two maps are powers of each other, then they are interchangeable in some sense (as the set of iteratons of them is the same). After that we will find the conjugacy classes of maximal orbit maps.

\begin{definition} We say that two permutations $c, c'\in \perm(N)$, where $N\in \N^*$, are equivalent if 
$<c>=<c'>$, i.e. there exist $a,b\in \N^*$ such that $c^a=c', (c')^b=c$. 
\end{definition}

\begin{definition} \label{standardform} $\sigma=(x_1+g_1,\ldots, x_n+g_n) \in \BB_n(\F_p)$ is said to be in {\em standard form} if 
$\sigma (0,0,\ldots,0)=(1,0,0,\ldots,0,0)$, i.e. the constant terms of $g_2,\ldots, g_{n}$ are zero and $g_1=1$. 
\end{definition}

%\begin{remark} \label{Rem1} Note that if $f\in \BB$ is of maximal orbit, then $f$ is equivalent to $f^m$ if and only if $(m,p)=1$:  if $p$ does not divide $m$ then if $am+bp=1$ for $a,b\in \N$, then $(f^m)^a=f$, and if $p$ divides $m$ then $f^p$ has orbit $p^{n-1}$. 
%\end{remark}

\begin{lemma} If $\sigma\in \BB_n(\F_p)$ is of maximal orbit, then there is exactly one $\sigma'\in \BB_n(\F_p)$ in standard form, such that
$\sigma,\sigma'$ are equivalent. In other words, standard form maximal orbit maps form a complete set of representatives of the maximal orbit maps
modulo equivalence.
\end{lemma}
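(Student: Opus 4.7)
The plan is to use that when $\sigma$ is of maximal orbit, $\langle \sigma \rangle$ is cyclic of order $p^n$, so the equivalence class of $\sigma$ is precisely the set of generators $\{\sigma^a : \gcd(a,p)=1\}$ of this cyclic group. The goal is to exhibit exactly one such generator in standard form.

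For \emph{existence}, first note by Theorem \ref{T3.2} that $g_1$, being a constant, equals the nonzero coefficient $c_1 \in \F_p^*$. A simple induction on $a$ shows that the first coordinate of $\sigma^a$ is $x_1 + a c_1$, so the first coordinate of $\sigma^a(0, \ldots, 0)$ equals $a c_1 \in \F_p$. Since $\sigma$ has a single orbit of length $p^n$, the map $a \mapsto \sigma^a(0)$ is a bijection $\Z/p^n\Z \to \F_p^n$, so there is a unique $m \in \{0, 1, \ldots, p^n-1\}$ with $\sigma^m(0) = (1, 0, \ldots, 0)$. The first-coordinate computation then forces $m c_1 = 1$ in $\F_p$, so $\gcd(m, p) = 1$, and hence $\sigma' := \sigma^m$ is equivalent to $\sigma$. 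Writing $\sigma' = (x_1 + g_1', \ldots, x_n + g_n')$ (powers of strictly triangular maps remain strictly triangular), the equation $\sigma'(0, \ldots, 0) = (1, 0, \ldots, 0)$ translates precisely to $g_1' = 1$ together with the vanishing of the constant term of each $g_i'$ for $i \geq 2$; that is, $\sigma'$ is in standard form.

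For \emph{uniqueness}, suppose $\sigma_1', \sigma_2'$ are both in standard form and both equivalent to $\sigma$. They share the cyclic group $\langle \sigma \rangle$ of order $p^n$, so $\sigma_2' = (\sigma_1')^a$ for some $a$ coprime to $p$. Evaluating at $0$, both sides yield $(1, 0, \ldots, 0)$, so $(\sigma_1')^{a-1}$ fixes the point $(1, 0, \ldots, 0)$. But $\sigma_1'$ consists of a single cycle of length $p^n$, so any power $(\sigma_1')^{k}$ decomposes into $\gcd(k, p^n)$ cycles of equal length $p^n / \gcd(k, p^n)$, and can fix a point only when $p^n \mid k$. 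Therefore $a \equiv 1 \pmod{p^n}$, giving $\sigma_1' = \sigma_2'$.

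No significant obstacle arises: the essential observation is that the first coordinate of $\sigma^a$ evolves linearly in $a$, which automatically forces the index $m$ producing standard form to be coprime to $p$; the rest is a counting argument combined with the cycle structure of powers of a single long cycle.
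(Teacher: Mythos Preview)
Your proof is correct and follows essentially the same approach as the paper: both locate the unique power $\sigma^m$ sending $(0,\ldots,0)$ to $(1,0,\ldots,0)$ and verify that $m$ is coprime to $p$ via the first-coordinate computation. Your argument is in fact slightly more streamlined, since you compute the first coordinate of $\sigma^m(0)$ directly as $mc_1$ rather than first replacing $\sigma$ by an equivalent power with $g_1=1$ as the paper does; your uniqueness argument (via fixed points of powers of a single long cycle) is also more explicit than the paper's one-line appeal to the same fact.
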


\begin{proof}
Write $\sigma=(x_1+g_1,\tilde{\sigma})$. 
Since $\sigma$ is of maximal orbit, $g_1\not =0$. Now let $a\in \N$ be the inverse of $g_1$ modulo $p$. Then $\sigma^a=(x_1+ag_1,\ldots)=(x_1+1,\ldots)$ and   $\sigma^a$ is equivalent to $\sigma$. So we can assume that $g_1=1$ by replacing $\sigma$ by $\sigma^a$. 

Now, starting with $O:=(0,0,\ldots,0)$ and iterating $\sigma$, we see that $\sigma^m(O)=(m\mod p,\ldots)$. 
So, this first coordinate equals 1 if and only if $m\mod p=1$, which means that $m=ap+1$ for some $a\in \N$. 
Since $\sigma$ is of maximal orbit, the sequence $O, \sigma(O), \sigma^2(O), \ldots, \sigma^{p^n-1}(O)$ lists all elements of $\F_p^n$. 
The sublist of vectors starting with 1 is $\sigma(O), \sigma^{p+1}(O), \sigma^{2p+1}(O), \ldots, \sigma^{p^n-p+1}(O)$. 
One of these elements equals $(1,0,\ldots,0,0)$, i.e. there exists exactly one $a\in \N$ such that $\sigma^{ap+1}(O)=(1,0,\ldots,0,0)$. 
By lemma \ref{simpel} (vii) , $\sigma^{ap+1}$ is equivalent to $\sigma$, and satisfies the above requirement. Uniqueness is automatic, as for a cycle of length $p^n$ in $\perm(\F_p^n)$ there is only one power of that cycle sending $O$ to $(1,0,\ldots,0,0)$. 
\end{proof}

We will now focus on finding representatives for the conjugacy classes of maximal orbit maps. 

\begin{definition}
Write $x^{\alpha}=x_1^{\alpha_1}\cdots x_n^{\alpha_n}$ for $\alpha\in \F_p^n$. 
Define 
\[ R_n^{-}:=\sum_{\alpha\in \F_p^n, \alpha\not = (p-1,\ldots, p-1)} \F_p x^{\alpha} \]
the subvector space of $R_n$ without the monomial $(x_1\cdots x_n)^{p-1}$. \\
If $\sigma\in \BB_n(\F_p)$, define $\sigma^*: R_n\lp R_n$ by $\sigma^*(f)=f(\sigma)$. Note that $\sigma^*$ is a ring homomorphism and in particular additive.
We denote by $e^*$ the identity map on $R_n$. 
\end{definition} 

\begin{lemma} \label{kern}  $\sigma\in \BB_n(\F_p)$ is of maximal orbit if and only if $\ker(\sigma^* -e^*)=\F_p$. 
\end{lemma}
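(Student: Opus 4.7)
The plan is to translate the condition $\sigma^*(f)=f$ into a combinatorial statement about $\sigma$-orbits on $\F_p^n$, using the identification $R_n \cong \Maps(\F_p^n,\F_p)$ established in Section 2.1. Under this identification, an element $f\in R_n$ is nothing but a function $\F_p^n\to\F_p$, and for $\sigma\in\BB_n(\F_p)$ the induced ring endomorphism $\sigma^*$ is exactly precomposition: $\sigma^*(f)=f\circ\sigma$. Hence
\[
\ker(\sigma^*-e^*) \;=\; \{\,f\in R_n \;:\; f\circ\sigma = f\,\},
\]
which is the space of functions $\F_p^n\to\F_p$ that are constant on every orbit of $\sigma$.

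Next I would observe that the $\F_p$-vector space of functions that are constant on each orbit has a canonical basis consisting of the indicator functions of the orbits (these are genuine elements of $R_n$ since every function $\F_p^n\to\F_p$ is polynomial). Consequently
\[
\dim_{\F_p}\ker(\sigma^*-e^*) \;=\; \#\{\text{orbits of }\sigma \text{ on }\F_p^n\}.
\]
The forward direction is then immediate: if $\sigma$ is of maximal orbit, it has a single orbit on $\F_p^n$, so the kernel has dimension $1$. Since the constants $\F_p$ certainly lie in the kernel, they exhaust it, i.e.\ $\ker(\sigma^*-e^*)=\F_p$.

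For the converse, suppose $\ker(\sigma^*-e^*)=\F_p$, so there is exactly one orbit. Its length is $p^n$, hence $\sigma$ consists of a single cycle of length $p^n$ and is therefore of maximal orbit. No obstacle is really expected here; the only point deserving care is justifying that orbit-indicator functions actually lie in $R_n$, which is a direct consequence of the identification $R_n=\Maps(\F_p^n,\F_p)$ recorded in Section~2.1, together with the fact that $\sigma^*$ preserves this identification because $\sigma$ is polynomial.
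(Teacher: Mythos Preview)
Your proof is correct and follows essentially the same approach as the paper: both translate the condition $\sigma^*(f)=f$ into ``$f$ is constant on the orbits of $\sigma$'' via the identification $R_n\cong\Maps(\F_p^n,\F_p)$, and then conclude by counting orbits. Your use of orbit-indicator functions to pin down $\dim_{\F_p}\ker(\sigma^*-e^*)$ as the number of orbits is a mild sharpening of the paper's more informal remark that with two or more orbits one can build a nonconstant invariant function.
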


\begin{proof}
We will prove the following equivalences: Let $f\in \ker(\sigma^* -e^*)$. Then $0=\sigma^*(f)-e^*(f)=f(\sigma)-f$ so \\
\[ \begin{array}{rl}
& f\in \ker(\sigma^* -e^*)\\
\desda& f=f(\sigma)\\
\desda & f=f(\sigma^i) \forall i\in \N\\
\desda &  f(\alpha)=f(\sigma^i(\alpha)) \forall i\in \N, \alpha\in \F_p^n\\
\desda &  f\textup{~is~constant~on~orbits~of~}\sigma
\end{array} \]
If $\sigma$ has just one orbit, then $f$ is a constant function (and since $f\in R_n$ this indeed means $f\in \F_p$), and if $\sigma$ has at least two orbits, then 
$f$ need not be constant as it can obtain a different value for each orbit.
\end{proof}

\begin{corollary} \label{corrkern}
If $\sigma\in \BB_n(\F_p)$, then $\im(\sigma^* -e^*)\subseteq R_n^-$.
If $\sigma$ is of maximal orbit, then we  have equality $\im(\sigma^* -e^*)=R_n^-$.
\end{corollary}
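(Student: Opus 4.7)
The statement splits into two inclusions, and neither requires much beyond a coefficient computation and a dimension count.

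For the inclusion $\im(\sigma^* - e^*) \subseteq R_n^-$, the plan is to identify the coefficient of the top monomial $(x_1\cdots x_n)^{p-1}$ in $f \in R_n$ with the sum $\sum_{\alpha \in \F_p^n} f(\alpha)$, which is manifestly invariant under any permutation of $\F_p^n$. Writing $f = \sum_\beta c_\beta x^\beta$ with $\beta \in \{0,\ldots,p-1\}^n$, I would use the elementary identity
\[ \sum_{a \in \F_p} a^j \;=\; \begin{cases} -1 & \text{if } j = p-1,\\ 0 & \text{if } 0 \leq j \leq p-2,\end{cases} \]
(with the convention $0^0 = 1$) applied coordinatewise to obtain
\[ \sum_{\alpha \in \F_p^n} f(\alpha) \;=\; (-1)^n\, c_{(p-1,\ldots,p-1)}. \]
Since $\sigma$ is a bijection of $\F_p^n$, the sum $\sum_\alpha (\sigma^* f)(\alpha) = \sum_\alpha f(\sigma(\alpha))$ equals $\sum_\alpha f(\alpha)$, so $\sigma^* f - f$ has vanishing coefficient at the top monomial and therefore lies in $R_n^-$.

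For the reverse inclusion under the maximal orbit hypothesis, the plan is to combine Lemma \ref{kern} with a dimension count. The map $\sigma^* - e^*$ is $\F_p$-linear on the $p^n$-dimensional space $R_n$, and by Lemma \ref{kern} its kernel is $\F_p$, of dimension $1$. Rank-nullity then gives $\dim_{\F_p} \im(\sigma^* - e^*) = p^n - 1$, which matches $\dim_{\F_p} R_n^- = p^n - 1$. Together with the inclusion already established, this forces equality.

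The only step requiring any attention is the coefficient identity in the first part; the rest is a straightforward dimension argument, so I do not anticipate a real obstacle.
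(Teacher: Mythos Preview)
Your proof is correct. The second part (equality under the maximal-orbit hypothesis) is identical to the paper's argument: invoke Lemma~\ref{kern} and use rank--nullity.

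For the inclusion $\im(\sigma^*-e^*)\subseteq R_n^-$, you take a different route from the paper. The paper argues structurally: it first observes that $\sigma^*(R_n^-)\subseteq R_n^-$ (which follows from the triangular shape of $\sigma$, since the $x_n$-degree of $\sigma^*(x^\beta)$ is exactly $\beta_n$, and one inducts), then separately checks by a direct computation that $(\sigma^*-e^*)\bigl((x_1\cdots x_n)^{p-1}\bigr)\in R_n^-$, and concludes by linearity. Your argument instead identifies the top coefficient of any $f\in R_n$ with $(-1)^n\sum_{\alpha\in\F_p^n}f(\alpha)$ via the standard power-sum identity, and observes that this quantity is invariant under precomposition with any bijection of $\F_p^n$. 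This is cleaner and in fact proves the stronger statement that the inclusion holds for \emph{every} $\sigma\in\perm(\F_p^n)$, not just the strictly triangular ones; the paper's approach, by contrast, leans on the triangular form of $\sigma$ and leaves the key step as ``a computation shows.''
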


\begin{proof}
Note that $\sigma^*(R_n^-)\subset R_n^-$. A computation shows that $(\sigma^*-e^*)((x_1\cdots x_n)^{p-1})\in \R_n^-$. 
Because of the linearity of $\sigma^*-e^*$ we thus have that $(\sigma^*-e^*)R_n=(\sigma^*-e^*)(\F_p (x_1\cdots x_n)^{p-1}+R_n^-)
\subseteq \F_p (\sigma^*-e^*)( (x_1\cdots x_n)^{p-1}) +(\sigma^*-e^*)(R_n^-)\subseteq R_n^-$. 

The second part follows from lemma \ref{kern}: the kernel has dimension 1, so the image must have codimension 1.
\end{proof}

\begin{proposition} \label{conjugate} Let $\sigma, \tau\in \BB_n(\F_p)$ of maximal orbit, i.e. 
\[ \sigma=(x_1+\lambda_1,~ x_2+\lambda_2x_1^{p-1}+g_2,~x_3+\lambda_3(x_1x_2)^{p-1}+g_3, \ldots,~ x_n+\lambda_n(x_1\cdots x_{n-1})^{p-1}+g_n),\]
\[ \tau=(x_1+\mu_1, x_2+\mu_2x_1^{p-1}+h_2,x_3+\mu_3(x_1x_2)^{p-1}+h_3, \ldots, x_n+\mu_n(x_1\cdots x_{n-1})^{p-1}+h_n),\]
where $\lambda_i, \mu_i\in \F_p^*$, and $g_{i},h_i\in R_{i-1}^-$. 
Then there exists $\varphi\in \BB_n(\F_p)$ such that $\varphi^{-1}\sigma\varphi=\tau$ if and only if $\lambda_i=\mu_i$ for all $1\leq i \leq n$. If $\varphi$ exists, then one may additionally assume $\varphi$ to be in standard form (see definition \ref{standardform}), and then $\varphi$ is unique.
\end{proposition}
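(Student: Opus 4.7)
The strategy is to unpack the conjugation equation $\sigma\varphi = \varphi\tau$ coordinate-by-coordinate and reduce it to a sequence of difference equations for the coefficients $\varphi_i$ of $\varphi = (x_1+\varphi_1,\ldots,x_n+\varphi_n)$, where $\varphi_i \in R_{i-1}$. Writing $\bar\tau_{i-1} := (\tau^{(1)},\ldots,\tau^{(i-1)}) \in \BB_{i-1}(\F_p)$, the equation at coordinate $i=1$ reduces to $\lambda_1 = \mu_1$ with $\varphi_1 \in \F_p$ free, while at coordinate $i \geq 2$ one directly expands to obtain
\[ \bar\tau_{i-1}^*(\varphi_i) - \varphi_i \;=\; \lambda_i\bigl(\varphi^{(1)}\cdots\varphi^{(i-1)}\bigr)^{p-1} - \mu_i(x_1\cdots x_{i-1})^{p-1} + g_i(\varphi^{(1)},\ldots,\varphi^{(i-1)}) - h_i \;=:\; W_i. \]
The central tool is the linear functional $\pi_m : R_m \to \F_p$ that extracts the coefficient of $(x_1\cdots x_m)^{p-1}$. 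From the elementary identity $\sum_{\beta\in\F_p}\beta^k = -1$ if $(p-1)\mid k$ and $k \geq 1$, and $0$ otherwise, one obtains the formula $\pi_m(f) = (-1)^m\sum_{\beta\in\F_p^m}f(\beta)$; since any $\alpha \in \BB_m(\F_p)$ induces a bijection of $\F_p^m$, this yields the crucial invariance $\pi_m \circ \alpha^* = \pi_m$.

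For necessity, apply $\pi_{i-1}$ to the displayed equation. The left-hand side lies in $\im(\bar\tau_{i-1}^* - e^*) \subseteq R_{i-1}^-$ by Corollary \ref{corrkern}, so its image under $\pi_{i-1}$ vanishes. On the right, $g_i, h_i \in R_{i-1}^-$ together with the $\BB_{i-1}(\F_p)$-invariance of $\pi_{i-1}$ give $\pi_{i-1}(g_i(\varphi^{(1)},\ldots)) = \pi_{i-1}(g_i) = 0$ and $\pi_{i-1}(h_i) = 0$, while $\pi_{i-1}((\varphi^{(1)}\cdots\varphi^{(i-1)})^{p-1}) = \pi_{i-1}((x_1\cdots x_{i-1})^{p-1}) = 1$ by the same invariance (applied to the projection of $\varphi$ onto the first $i-1$ coordinates, which is in $\BB_{i-1}(\F_p)$). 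Hence $\pi_{i-1}(W_i) = \lambda_i - \mu_i$, forcing $\lambda_i = \mu_i$.

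Conversely, assuming $\lambda_i = \mu_i$ for all $i$, build $\varphi$ inductively: set $\varphi_1 := 1$, and given $\varphi_1,\ldots,\varphi_{i-1}$, observe that $\bar\tau_{i-1}$ is of maximal orbit by Theorem \ref{T3.2}, so $\im(\bar\tau_{i-1}^* - e^*) = R_{i-1}^-$ by Corollary \ref{corrkern}. The previous computation gives $\pi_{i-1}(W_i) = \lambda_i - \mu_i = 0$, hence $W_i \in R_{i-1}^-$, and a solution $\varphi_i$ exists, unique modulo $\ker(\bar\tau_{i-1}^* - e^*) = \F_p$ by Lemma \ref{kern}. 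The standard-form constraint $\varphi_i(O) = 0$ pins down this constant, producing a unique standard-form $\varphi$. Alternatively, uniqueness can be seen globally: if $\varphi,\varphi'$ are two standard-form conjugators, then $\psi := \varphi(\varphi')^{-1}$ centralizes $\sigma$; but as $\sigma$ is a single $p^n$-cycle in $\perm(\F_p^n)$ its centralizer there is $\langle\sigma\rangle$, and $\psi(1,0,\ldots,0) = \varphi(O) = (1,0,\ldots,0)$ forces $\sigma^k$ to fix a point, whence $\psi = e$. The main obstacle is the $\BB_m(\F_p)$-invariance of $\pi_m$—the recognition that the top coefficient is, up to sign, the sum of values over $\F_p^m$ and hence invariant under composition with any polynomial bijection—after which both implications collapse to the vanishing of a single scalar obstruction.
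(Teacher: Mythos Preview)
Your proof is correct and follows the same inductive skeleton as the paper: build $\varphi$ one coordinate at a time, and at step $i$ reduce to a difference equation $(\bar\tau_{i-1}^*-e^*)(\varphi_i)=W_i$ which, by Lemma~\ref{kern} and Corollary~\ref{corrkern}, is solvable (uniquely up to a constant) precisely when $W_i\in R_{i-1}^-$. The paper organises the induction slightly differently---it factors $\varphi=(\tilde\varphi,x_n)\cdot\phi$ with $\phi=(x_1,\ldots,x_{n-1},x_n+f_n)$ and treats the two factors separately---and at the key point simply \emph{asserts} that conjugation by $(\tilde\varphi,x_n)$ leaves the coefficient $\lambda_n$ of $(x_1\cdots x_{n-1})^{p-1}$ unchanged. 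Your summation identity $\pi_m(f)=(-1)^m\sum_{\beta\in\F_p^m}f(\beta)$, and the consequent invariance $\pi_m\circ\alpha^*=\pi_m$ for any bijection $\alpha$ of $\F_p^m$, is exactly the missing justification for that step, and it makes the ``only if'' direction more transparent than in the paper. Your alternative global uniqueness argument via the centraliser of a full $p^n$-cycle in $\perm(\F_p^n)$ is also a pleasant addition; the paper gives only the inductive uniqueness.
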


The above proposition hence shows that $\lambda_1,\ldots, \lambda_n$ is a defining invariant for the conjugacy class of $\sigma$.

\begin{proof}
We use induction on $n$. The case $n=1$ is obvious (one picks $\varphi=(x_1+1)$, which is in standard form). Write $\sigma=(\tilde{\sigma}, x_n+g_n), \tau=(\tilde{\tau}, x_n+h_n)$ for some $g_n,h_n\in R_{n-1}$. 
The induction assumption means we can find a unique standard form map $\tilde{\varphi}$ in $n-1$ variables such that 
$\tilde{\varphi}^{-1}\sigma \tilde{\varphi}=\tilde{\tau}$ if and only if $\lambda_1=\mu_1,\ldots, \lambda_{n-1}=\mu_{n-1}$. 
We will extend $\varphi:=(\tilde{\varphi}, x_n)\phi$ where $\phi:=(x_1,\ldots, x_{n-1}, x_n+f_n)$ for some $f_n\in R_{n-1}$.
Write $(\tilde{\varphi}, x_n)^{-1} \sigma (\tilde{\varphi}, x_n)=(\tilde{\tau}, x_n+\lambda_n(x_1\cdots x_{n-1})^{p-1}+k_n)$
 where $k_n\in R_{n-1}^-$.
Now a computation reveals $\phi^{-1}(\tilde{\tau}, x_n+\lambda_n(x_1\cdots x_{n-1})^{p-1}+k_n)\phi= (\tilde{\tau}, x_n+\lambda_n(x_1\cdots x_{n-1})^{p-1}+k_n+(e^*-\tilde{\tau}^*)(f_n) )$.
We thus are (only) able to change $\lambda_n(x_1\cdots x_{n-1})^{p-1}+k_n$ by elements of $R_{n-1}^-$ as corollary \ref{corrkern} shows,
meaning that $\tau$ and $\sigma$ are only conjugate if $\lambda_n=\mu_n$. Let us assume the latter, and pick $f_n$ so that $(e^*-\tilde{\tau}^*)(f_n)=k_n$. If we assume $f_n$ to have constant part zero, then $f_n$ is unique. 
$\varphi$ is now on normal form by construction, and the above shows that it is unique.
\end{proof}

\begin{definition} Define $\delta_i\in R_i$ as the polynomial such that $\delta_i(p-1,\ldots, p-1)=1$ and $\delta_i(\alpha)=0$ for all
other $\alpha\in \F_p^i$. (And $\delta_0=1$.) Then define
\[\Delta:=(x_1+\delta_0,x_2+\delta_1,\ldots, x_n+\delta_{n-1}).\]
\end{definition}

\begin{theorem} \label{conjugate2}
Let $\sigma\in \BB_n(\F_p)$ be of maximal orbit. Then there exist a unique $\varphi\in \BB_n(\F_p)$ in standard form, and a diagonal linear map $D$, such that $D^{-1}\varphi^{-1}\sigma\varphi D=\Delta$. 
\end{theorem}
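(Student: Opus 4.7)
The plan is to reduce Theorem \ref{conjugate2} to Proposition \ref{conjugate} by using the diagonal conjugation to match the invariants $(\lambda_1,\ldots,\lambda_n)$ of $\sigma$ with those of $\Delta$.

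First I would verify that $\Delta$ is itself of maximal orbit and determine its invariants. Since $\delta_{i-1}$ is the indicator function of the point $(p-1,\ldots,p-1)\in\F_p^{i-1}$, it factors as $\delta_{i-1}=\prod_{j=1}^{i-1}P(x_j)$, where $P(x)\in\F_p[x]$ is the single-variable indicator of $p-1$. Lagrange interpolation together with Wilson's theorem gives $P(x)=-\prod_{k=0}^{p-2}(x-k)$, a polynomial of degree $p-1$ with leading coefficient $-1$. Therefore the coefficient of $(x_1\cdots x_{i-1})^{p-1}$ in $\delta_{i-1}$ is $(-1)^{i-1}$, and by Theorem \ref{T3.2} the map $\Delta$ is of maximal orbit with invariants $\lambda_i(\Delta)=(-1)^{i-1}$.

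Next I would work out how a diagonal linear map $D=(d_1x_1,\ldots,d_nx_n)$ with $d_i\in\F_p^*$ acts by conjugation on invariants. A direct computation yields that if $\tau=(x_1+g_1,\ldots,x_n+g_n)$ has invariants $(\mu_1,\ldots,\mu_n)$, then the $i$-th component of $D\tau D^{-1}$ is $x_i+d_i\,g_i(d_1^{-1}x_1,\ldots,d_{i-1}^{-1}x_{i-1})$. Because Fermat's little theorem gives $d_j^{p-1}=1$, the substitution $x_j\mapsto d_j^{-1}x_j$ preserves the top monomial $(x_1\cdots x_{i-1})^{p-1}$, so the $i$-th invariant of $D\tau D^{-1}$ equals $d_i\mu_i$. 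In particular $D\Delta D^{-1}$ has invariants $(d_i(-1)^{i-1})_{i=1}^{n}$.

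Finally, setting $d_i:=\lambda_i(-1)^{i-1}$ makes the invariants of $\sigma$ and of $D\Delta D^{-1}$ coincide, so Proposition \ref{conjugate} furnishes a unique standard-form $\varphi\in\BB_n(\F_p)$ with $\varphi^{-1}\sigma\varphi=D\Delta D^{-1}$, equivalently $D^{-1}\varphi^{-1}\sigma\varphi D=\Delta$. Uniqueness of $\varphi$ is immediate from Proposition \ref{conjugate} once $D$ is fixed; moreover $D$ is itself forced, since any alternative $D'$ realising the identity must satisfy $d'_i(-1)^{i-1}=\lambda_i$ by the top-coefficient calculation (the inner $\BB_n(\F_p)$-conjugation leaves invariants unchanged), so $D'=D$. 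I do not anticipate a serious obstacle: the only delicate step is the leading-coefficient computation for $\delta_{i-1}$ via Wilson's theorem and the Fermat-based observation that scalar substitution acts trivially on top monomials in $R_n$.
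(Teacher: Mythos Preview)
Your proof is correct and follows essentially the same strategy as the paper: conjugate $\Delta$ by a diagonal map to match the top-monomial invariants of $\sigma$, then invoke Proposition~\ref{conjugate}. You are more explicit than the paper --- computing the invariants $(-1)^{i-1}$ of $\Delta$ via Wilson's theorem and the diagonal action on invariants via Fermat's little theorem, and supplying the uniqueness of $D$ --- where the paper simply writes ``a computation reveals.''
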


\begin{proof}
Write $\mu_i$ for the coefficient of $(x_1\cdots x_{i-1})^{p-1}$ in  $\delta_{i-1}$ ($\mu_1=1$).
By proposition \ref{conjugate} we see that $\sigma$ is equivalent to $(x_1+\lambda_1, x_2+\lambda_2\delta_1,\ldots, x_n+\lambda_n\delta_{n-1})$ for some $\lambda_i\in \F_p^*$.   Write $D:=(\lambda_1x_1,\ldots, \lambda_nx_n)$. 
By proposition \ref{conjugate} there exists a unique $\varphi\in \BB_n(\F_p)$ in standard form such that 
$\varphi^{-1}\sigma \varphi=(x_1+\lambda_1, x_2+\lambda_2\delta_1(D^{-1}), x_3+\lambda_3\delta_2(D^{-1}),\ldots,
x_n+\lambda_n\delta_{n-1}(D^{-1}))$. 
Now a computation reveals that $D^{-1}\varphi^{-1}\sigma \varphi D=\Delta$. 
\end{proof}

The above theorem thus enables us to see {\em all} maximal orbit maps as a unique conjugate of one map, namely $\Delta$. This map is, in some sense, very simple, as the following remark shows:

\begin{remark} \label{remark}
Define the bijection $\zeta: \Z/p^n\Z\lp  (\F_p)^n$ by $\zeta(a_0+a_1p+\ldots+a_{n-1}p^{n-1}) =(a_0,\ldots, a_{n-1}) \mod p $
where $0\leq a_i\leq p-1$. Then $\zeta^{-1}\Delta\zeta$ is the map $m\lp m+1$. 
\end{remark}

\section{Efficiently iterating maximal orbit triangular maps}
\label{vijf}

\subsection{Basic idea}

In some applications (the next section is an example) it might be necessary to efficently evaluate $\sigma^a(v)$ for a given $\sigma\in \BB_n(\F_p)$ of maximal orbit, and $a\in \Z$, $v\in \F_p^n$. Here we explain how to do this most efficiently, with respect to computation.

First, we find $\varphi$ and $D$ as given in theorem \ref{conjugate2}: thus $\sigma = D \varphi \Delta\varphi^{-1}D^{-1}$. 
Note that because of remark \ref{remark} it is trivial to compute $\Delta^a(v)$ for any given $v\in \F_p^n, a\in Z$: this part of the computation is negligible. We will consider any addition to be negligible anyway, and simply count the number of multiplications in $\F_p$
are needed. Hence, the evaluation $\sigma^a(v)$ needs 

\begin{itemize}
\item evaluations $D(v), D^{-1}(v)$,
\item evaluations $\varphi(v), \varphi^{-1}(v)$.
\end{itemize}

The storage of $\varphi$ does not immediately mean that $\varphi^{-1}$ is stored (or efficiently computable). However, the following
representation solves this: 

\begin{definition} Write $(x_i+g_i)$ for the map $(x_1,\ldots, x_{i-1}, x_i+g_i, x_{i+1}, \ldots, x_n)$. If
$g_i\in k[x_1,\ldots, x_{i-1},x_{i+1},\ldots, x_n]$ then its inverse is (as can be easily checked) $(x_i-g_i)$. 
\end{definition}

Note that if  $\varphi=(x_1+g_1,\ldots,x_n+g_n)$ then $\varphi= (x_1+g_1)(x_2+g_2)\cdots (x_n+g_n)$. 
Hence, $\varphi^{-1}=(x_n-g_n)(x_{n-1}-g_{n-1})\cdots (x_1-g_1)$. Thus, evaluation of $\varphi^{-1}(v)$ is of the same complexity as $\varphi(v)$, and it is not necessary to store anything extra.

\subsection{Storage size} \label{vijfpunttwee}

For completeness sake, we point out what the necesary storage size is.
With storage size we mean the amount of memory  required for storing a map $\sigma$ (or amount of memory necessary to fix which map $\sigma$ one has stored). 
The theoretical minimum storage size of a map $\sigma$ is bounded by the number of different elements in $\B_n(\F_p)$ of maximal orbit.
The amount of maximal orbit maps is approximately $p^{\frac{p^n-1}{p-1}}$ (i.e. one could store  $\frac{p^n-1}{p-1}$ coefficients in $\F_p$).

However, if we want to store the useful description above, then one stores $D$,  $\varphi$ and $\Delta$, which is approximately double of 
that,
i.e. we have to store approximately $2\frac{p^n-1}{p-1}$ coefficients in $\F_p$ in order to fix $\sigma$.

\subsection{Efficiency}

Since the generic audience of this journal might not be familiar with the topic of this section, we give a very brief background and mention some standard facts.
In this section we try to point out what is the  complexity (efficiency) of computations. As is custom, we neglect any additions in $\F_p$ as it is negligible with respect to the computational weight multiplication in $\F_p$ takes.  Hence, we need to determine how many multiplications are necessary. 

For example, if one has an arbitrary element $x\in \F_p$, and $0\leq m\leq p-1$ arbitrary, then the amount of multiplications necessary to compute $x^m$ is at most $2\log(m)$. \footnote{$\log$ in this section is $\log_2$} 
One simple algorithm is to iteratively square $x$ (i.e. compute $x^{(2^a)}$, at most $\log(m)$  multiplications) and then multiply those factors which one needs (at most $\log(m)$). 

The most important factor in computational efficiency is the {\em order} of computational complexity, which means that one does not compute the exact formula for amount of multiplications  given in the parameters (which here are  $p$ and $n$), 
but focuses on the heaviest factor if the parameters become larger and larger (``big O notation''). For example, 
$7p^2+p+4\sim O(p^2)$, or, is of order $p^2$.  Or, above, computing $x^m$ is of order $\log(m)$.\\

Note that the below basic lemma can probably be improved (see for example \cite{EMM}). 

\begin{lemma}\label{L5.5}
Let $f\in \F_p[x_1,\ldots,x_k]$ where $k\geq 1$ and $\deg_{x_i}(f)\leq (p-1)$ arbitrary. Then the expected amount of multiplications to evaluate $f$ is at most of order $p^{k-1}$. 
\end{lemma}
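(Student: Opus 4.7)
My plan is to induct on $k$, combining Horner's rule in the outermost variable with a one-time precomputation of monomial values in the remaining variables so that the $p$ inductive subproblems can be handled together. A naive recursion that treats the $p$ slices $f_a(\alpha_1,\ldots,\alpha_{k-1})$ as independent sub-evaluations yields only $T(k)\le p\,T(k-1)+O(p)=O(p^k)$, which is a factor of $p$ above the claimed bound, so some form of amortization at each step is essential.

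For the base case $k=1$, Horner's rule evaluates a degree-$(p-1)$ univariate polynomial at a point in at most $p-1$ multiplications, consistent with the stated order.

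For the inductive step, given $(\alpha_1,\ldots,\alpha_k)\in\F_p^k$, I first build the table $M_{(a_1,\ldots,a_{k-1})}:=\alpha_1^{a_1}\cdots\alpha_{k-1}^{a_{k-1}}$ for all multi-indices $(a_1,\ldots,a_{k-1})\in\{0,\ldots,p-1\}^{k-1}$ in a BFS that appends one factor $\alpha_j$ at a time; at level $j$ this costs $(p-1)p^{j-1}$ multiplications, summing to $\sum_{j=1}^{k-1}(p-1)p^{j-1}=p^{k-1}-1$ overall. I also precompute $\alpha_k,\alpha_k^2,\ldots,\alpha_k^{p-1}$ at cost $p-2$. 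Then I write $f(\alpha_1,\ldots,\alpha_k)=\sum_{a_k=0}^{p-1}\alpha_k^{a_k}\,s_{a_k}$, where the inner slice $s_{a_k}=\sum_{(a_1,\ldots,a_{k-1})}c_{(a_1,\ldots,a_k)}\,M_{(a_1,\ldots,a_{k-1})}$ is a linear combination of precomputed entries, and the outer combination with the precomputed powers of $\alpha_k$ uses a further $p-1$ multiplications. On paper this yields a total dominated by the $p^{k-1}-1$ of the precomputation, matching the asserted order $p^{k-1}$.

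The main obstacle lies in accounting honestly for the inner slices $s_{a_k}$. A literal count of every $\F_p$-multiplication would charge up to $p\cdot p^{k-1}=p^k$ for the scalar products $c_{(a_1,\ldots,a_k)}\cdot M_{(a_1,\ldots,a_{k-1})}$, wiping out the saving and returning us to the naive $O(p^k)$ bound. To extract the claimed order $p^{k-1}$ one has to adopt, and defend, a convention in which those coefficient-by-precomputed-value scalar products are treated as part of the additive bookkeeping (the coefficients being fixed constants drawn from a table, not ``new'' field elements produced during the computation), so that the genuinely multiplicative work is only the table-building plus the final $p-1$ combining multiplications. Making this convention precise, or alternatively invoking the refined evaluation schemes alluded to in \cite{EMM}, is the delicate point of the proof, and explains why the footnote signals that the bound is improvable.
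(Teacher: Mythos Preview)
Your dismissal of the ``naive recursion'' is where things go astray; the paper's proof is exactly that recursion, and it does give order $p^{k-1}$.

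The issue surfaces already in your base case. For $k=1$ the claimed bound is order $p^{k-1}=p^{0}=1$, i.e.\ a constant independent of $p$; Horner's $p-1$ multiplications is \emph{not} consistent with that---it is $O(p)$. The paper's base case is instead ``for $k=1$ we need less than $1$ multiplication'': the powers $\alpha_1^i$ are precomputed and declared negligible, and the products $(\text{coefficient})\times(\text{precomputed power})$ are not counted. That convention sets $T(1)=0$. With $T(1)=0$, the plain recurrence $T(k)=p\,T(k-1)+(p-1)$ (evaluate the $p$ coefficient polynomials $f_i$ recursively, then perform the $p-1$ genuine multiplications $f_i(\alpha_1,\ldots,\alpha_{k-1})\cdot\alpha_k^{\,i}$) solves to $T(k)=p^{k-1}-1$. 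No monomial table and no amortization are needed: all scalar multiplications sit at the $k=1$ leaves of the recursion tree, where they are declared free.

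Your monomial-table approach is not incorrect, but it buys nothing over the paper's recursion. Under the scalar-multiplications-free convention both methods give $O(p^{k-1})$; under a literal count of every $\F_p$-multiplication both give $O(p^{k})$---yours via the $p\cdot p^{k-1}$ coefficient-times-table-entry products you yourself flag, the naive one via $T(1)=O(p)$. The two schemes stand or fall together on the same accounting convention, so the extra machinery does no real work, and your assertion that ``some form of amortization at each step is essential'' is mistaken.
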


\begin{proof}
In any computation we need to evaluate  $x_i^{m}$ for each $m\leq p-1$, which costs in the order of $\log(m)$ multiplications, which will turn out to be negligible. We proceed by induction to $k$. If $k=1$ we need less than $1$ multiplication. Assume we have proven the theorem for $k-1$. Then $f\in \F_p[x_1,\ldots,x_n]$ means 
$f=\sum_{i=0}^{p-1} f_i x_k^i$ where $f_i\in \F_p[x_1,\ldots,x_{k-1}]$.
so we need to evaluate the $f_i$ and for all (but $f_0$) we need to multiply them by $x_k^i$. So we need order
 $p\cdot p^{k-2} =p^{k-1}$ multiplications.
\end{proof}

Note that a more detailed computation reveals that the number of multiplications necessary is actually very close to $p^{k-1}$ (not only of order $p^{k-1}$, which means it can be for example a large constant times $p^{k-1}$).

\begin{lemma} \label{eval} If $\varphi\in \BB_n(\F_p)$, then evaluation $\varphi(\lambda)$ for some $\lambda\in \F_p^n$ takes order
$\frac{p^{n-1}-1}{p-1}$ 
multiplications (or less than order $p^{n-1}$ multiplications).
\end{lemma}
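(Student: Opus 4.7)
The proof should be a direct induction/summation built on Lemma \ref{L5.5}. My plan is as follows.

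First, I would unfold the definition of $\varphi \in \BB_n(\F_p)$: write $\varphi = (x_1 + g_1, x_2 + g_2, \ldots, x_n + g_n)$ where $g_i \in \F_p[x_1, \ldots, x_{i-1}]$ with each variable occurring to degree at most $p-1$. Evaluating $\varphi(\lambda)$ consists of computing the scalars $\lambda_i + g_i(\lambda_1, \ldots, \lambda_{i-1})$ for $i = 1, \ldots, n$; since additions are free in this model, only the evaluations of the $g_i$ contribute.

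Next, I would invoke Lemma \ref{L5.5} to bound the cost of each coordinate. For $i = 1$, the polynomial $g_1 \in \F_p$ is constant and costs $0$ multiplications. For $i \geq 2$, $g_i$ is a polynomial in $k := i-1$ variables of degree at most $p-1$ in each, so by Lemma \ref{L5.5} its evaluation takes order $p^{k-1} = p^{i-2}$ multiplications.

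Summing the contributions gives the claimed bound:
\[
\sum_{i=1}^{n} (\text{cost of evaluating } g_i) \;\leq\; 0 + \sum_{i=2}^{n} p^{i-2} \;=\; 1 + p + p^2 + \cdots + p^{n-2} \;=\; \frac{p^{n-1}-1}{p-1},
\]
and since $\frac{p^{n-1}-1}{p-1} < p^{n-1}$, the parenthetical weaker bound follows at once.

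There is no real obstacle here; the only care needed is in book-keeping the indices correctly (the off-by-one between the number of variables $i-1$ appearing in $g_i$ and the exponent $i-2$ coming out of Lemma \ref{L5.5}), and in noting that $g_1$ is a constant so contributes nothing. The geometric series collapses exactly to the stated quantity, so the proof is just a one-line application of the previous lemma followed by summation.
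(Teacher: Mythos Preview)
Your proposal is correct and follows essentially the same route as the paper: decompose the evaluation of $\varphi$ into the evaluations of the $g_i$, apply Lemma~\ref{L5.5} to each $g_i$ with $i\geq 2$ to get cost of order $p^{i-2}$, and sum the resulting geometric series to obtain $\frac{p^{n-1}-1}{p-1}$. The paper's proof is slightly terser but identical in substance.
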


\begin{proof}
If $\sigma=(x_1+g_1,\ldots, x_n+g_n)$ where $g_i\in R_{i-1}$, then evaluation of $\sigma$ means evaluationg the $g_i$. 
By lemma \ref{L5.5}, evaluation of $g_i$ ($i\geq 2$) costs order $p^{i-2}$ multiplications. Thus, we have possibly $1+p+p^2+\ldots+p^{n-2}=
\frac{p^{n-1}-1}{p-1}$ multiplications that have to be done. 
\end{proof}

Again, a more detailed computation reveals that the number of multiplications is actually very close to $\frac{p^{n-1}-1}{p-1}$. \\

{\bf Remark:} If $p=2$, then multiplication is of the same complexity as addition, so the author suspects that the above focus on ``amount of multiplications'' may be misleading. Nevertheless, we expect that the above computations are representative, and we expect especially the $p=2$ case to be very efficient and useful in applications.

\section{Fast-forward functions}
\label{zes}

Insipred by the previous section, we would like to briefly point out how  one can use the insights obtained in the previous section in a topic in cryptography.
According to Naor and Reingold \cite{NoarReingold}, a function $\sigma :\{0,\ldots, N-1\}\lp \{0,\ldots, N-1\}$  is fast forward if for each natural number $m$ which
is polynomial in $N$, and each $x\in \{0,\ldots, N-1\}$,  the computational
complexity of evaluating $\sigma^m(x)$ (the $m$-th iterate of $\sigma$ at $x$) is small, more precisely, is polynomial in $\log(N)$. This is useful in simulations and cryptographic
applications, and for the study of dynamic-theoretic properties of the
function $\sigma$. 

The previous section explains that any element in $\BB_n(\F_p)$ can be conjugated to the function $\INC:z\lp z+1$ on $\Z/p^n\Z$, making iteration trivial. This inspires us to construct fast-forward functions exactly the other way around: take the map 
$\INC$ and conjugate by some triangular polynomial map.
In this case, we consider permutations of $\F_p^n$, so $N=p^n$. If we pick generic triangular maps $\varphi$, then lemma \ref{eval} shows that we need order $p^{n}$ (let's say for simplicity: order $p^n$) computations. This means that computations are of order $N$, and not $\log(N)$, as desired by Noar and Reingold.

However, one can restrict to specific $\varphi$, which are less complicated, or at least less complicated to compute. Indeed, one can: we suggest the following implementation.

{\bf Suggested implementation}\\
We pick $p$ a prime  and $n$ a dimension. We pick $P\in \Z[x]$ a polynomial.
For $1\leq i \leq n$, Pick polynomials $f_i\in R_{i-1}$ which are {\em sparse}: they should be the sum of at most $P(n)$ monomials  $M\in R_{i-1}$,\footnote{Whereas a generic polynomial can have up to $p^n$ monomials.} i.e. $M=\lambda x_{i+1}^{a_{i+1}}\cdots x_n^{a_n}$ for some $\lambda, a_i\in \{0,\ldots,p-1\}$. 
Define $\tau_i:=(x_1,\ldots, x_{i-1}, x_i+f_i, x_{i+1},\ldots,x_n)$. 
(Note that $\tau_i^{-1}=(x_1,\ldots, x_{i-1}, x_i-f_i, x_{i+1},\ldots,x_n)$.)
For $1\leq i \leq n$, pick random $\lambda_i\in \F_p^*$ and define $D:=(\lambda_1x_1,\ldots, \lambda_nx_n)$. 
Then define 
\[ \sigma:=\tau_1\cdots \tau_n D\zeta \INC \zeta^{-1} D^{-1}\tau_n^{-1}\cdots \tau_1^{-1}. \]

{\bf Computational complexity for evaluationg $\sigma(\lambda)$.}\\
We can ignore the computational difficulty of any additions and the evaluations of $\INC$, $D$ and $\zeta$. 
Hence, the weight of computation is on the evaluation of the $2n\times P(n)$ monomials appearing in $\tau_1,\ldots, \tau_n, \tau_1^{-1},\ldots, \tau_n^{-1}$. Computing an evaluation of $x_i^{a_i}$ for some $a_i\in \{0,\ldots, p-1\}$ has order of complexity 
$\log(p)$ (i.e. order $\log(p)$ multiplications). Hence, evaluating a monomial $M_i$ costs at most order $n\log(p)+n$ (i.e. $n\log(p)$) multiplications. Then, evaluating $nP(n)$ such monomials yields $P(n)n^2\log(p)=nP(n)\log(p^n)$, and since $n=\log(p^n)/\log(p)$, computations are polynomial in  $\log(p^n)$. \\

Our goal is simply to drop here this idea of construction fast-forward functions in this way, but we can make a few remarks on which interested parties can continue.

\begin{itemize}
\item 
One of the motivations for studying fast-forward functions is for pseudo-random permutations, and in that respect one should prove cryptographic
computational {\em indistinguisheability} (see page 1 of \cite{NoarReingold}), which roughly means that an element we constructed ``looks'' the same as a random element. 
We expect that for  a map constructed as in the suggested implementation (with $P(n)$ at least of degree 1 in $n$), will be computationally indistinguisheable (in the cryptographic sense) from a random element in $\BB_n(\F_p)$ of maximal orbit.
\item 
If one is more interested in constructing elements in $\perm(\F_p^n)$ which have only one cycle (a desireable property for cryptographic applications), then one can conjugate a map $\sigma $ as constructed above by triangular maps of opposite orientation: i.e. construct $\mu_i$ exactly as one would construct the $\tau_i$, then $\mu:=(x_n,x_{n-1},\ldots,x_1)\mu_1\cdots \mu_n (x_n,x_{n-1},\ldots,x_1)$ is exactly such a ``lower triangular map''. Thus, construct
\[ \mu\tau_1\cdots \tau_n D\zeta \INC \zeta^{-1} D^{-1}\tau_n^{-1}\cdots \tau_1^{-1}\mu^{-1}.\]
We expect such maps to be computationally indistinguisheable from a random element in $\perm(\F_p^n)$ of maximal orbit, but a proof is probably difficult. Nevertheless, such a construction has the potential to be  a powerful cryptographic primitive.
\end{itemize}

{\bf Remark:} In \cite{Moh} a public key cryptosystem is given using polynomial automorphisms. See also the papers \cite{JH, GC} for cryptanalysis etc.

%\begin{itemize}
%\item One may require the monomials $M_i$ chosen to be of a more specific type, for example $M_i=x_{i+1}^{a_{i+1}}\cdots x_n^{a_n}$ where $\gcd(a_i,p-1)=1$. This might be desireable in case the function $f$ is hidden, and there is an adversary who wants to obtain info on the function. He could obtain this by picking $(1,1,\ldots,1)$ as input, 
%\item It makes sense to replace the monomials $M_i$ by a random {\em{pair}} of monomials. This makes the distribution of
%\end{itemize}

\section{Generalities on polynomial maps $\Z\lp \F_p$}
\label{1var}
The definitions below
first appeared in \cite{PO19}, but we present them here in the form introduced
in \cite{Ell11}.

\begin{definition} Let $A, B\subseteq \Q$. Then define  
\[ \Int(A,B):=\{f\in \Q[T] ~|~f(A)\subseteq B\}.\]
 In particular, we abbreviate $\Int(\Z)=\Int(\Z,\Z)$. Note that $\Int(A,B)$ is a subring of $\Q[T]$.
\end{definition}

The following is a well-known lemma: 

\begin{lemma} \label{L1} 
\[ \Int(\Z)=\bigoplus_{i\in \N} \Z{T\choose i}=\Z\left[{T\choose i} ~\vline ~i\in \N\right ]. \]
\end{lemma}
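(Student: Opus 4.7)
The plan is to show the three objects $\Int(\Z)$, $\bigoplus_{i\in\N}\Z\binom{T}{i}$, and $\Z\bigl[\binom{T}{i}\mid i\in\N\bigr]$ coincide by establishing the chain
\[ \Int(\Z) \;\subseteq\; \bigoplus_{i\in\N}\Z\binom{T}{i} \;\subseteq\; \Z\!\left[\binom{T}{i}\,\middle|\,i\in\N\right] \;\subseteq\; \Int(\Z), \]
and then noting that linear independence of the $\binom{T}{i}$ justifies writing $\bigoplus$ rather than $\sum$.

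First I would verify that each $\binom{T}{i} = T(T-1)\cdots(T-i+1)/i!$ lies in $\Int(\Z)$. For $n\geq 0$ this is the usual binomial coefficient, while for $n<0$ one uses the elementary identity $\binom{-n}{i} = (-1)^i\binom{n+i-1}{i}$. This gives the rightmost inclusion, since $\Int(\Z)$ is visibly a ring. The middle inclusion is immediate, as any $\Z$-linear combination is a $\Z$-polynomial expression. Linear independence (justifying ``$\bigoplus$'') is automatic because $\binom{T}{i}$ has degree exactly $i$, so the family is even $\Q$-linearly independent in $\Q[T]$.

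The main step is the leftmost inclusion $\Int(\Z)\subseteq\bigoplus_i \Z\binom{T}{i}$. For this I would invoke the Newton forward-difference expansion. Introduce $\Delta\colon \Q[T]\to \Q[T]$, $\Delta f(T):=f(T+1)-f(T)$. A short induction on $\deg f$ (using $\Delta\binom{T}{i}=\binom{T}{i-1}$) yields the identity
\[ f(T) \;=\; \sum_{i=0}^{\deg f} (\Delta^i f)(0)\,\binom{T}{i}. \]
If $f\in\Int(\Z)$, then the coefficients
\[ (\Delta^i f)(0) \;=\; \sum_{j=0}^{i}(-1)^{i-j}\binom{i}{j}f(j) \]
are $\Z$-linear combinations of the integers $f(0),\ldots,f(i)$, hence themselves integers. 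This exhibits $f$ as a $\Z$-linear combination of the $\binom{T}{i}$, completing the chain.

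The whole argument is classical and there is no serious obstacle; the only step requiring any care is establishing the Newton expansion and observing that its coefficients $(\Delta^i f)(0)$ are integers whenever $f$ is integer-valued. Everything else is formal bookkeeping.
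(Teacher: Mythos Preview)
Your argument is correct and complete. The paper does not actually prove this lemma: it simply cites Cahen--Chabert, \emph{Integer-Valued Polynomials}, Proposition~I.1.1. Your proof via the Newton forward-difference expansion, with coefficients $(\Delta^i f)(0)=\sum_{j=0}^{i}(-1)^{i-j}\binom{i}{j}f(j)\in\Z$, is precisely the classical argument one finds there, so you have supplied what the paper outsourced.
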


 A proof can be found in \cite{CC}, proposition I.1.1.

\begin{corollary} \label{L1a}\label{C2}
\[ \Int(\Z,\Z_{(p)})=\bigoplus_{i\in \N} \Z_{(p)}{T\choose i}=\Z_{(p)}\left[{T\choose i} ~\vline ~i\in \N\right].\]
\end{corollary}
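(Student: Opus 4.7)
The plan is to reduce to Lemma \ref{L1} using Newton's forward-difference operator to extract the coefficients. Any $f \in \Int(\Z,\Z_{(p)})$ lies a priori in $\Q[T]$, and the binomials $\binom{T}{i}$ form a $\Q$-basis of $\Q[T]$, so $f$ has a unique expansion $f = \sum_{i=0}^{d} c_i \binom{T}{i}$ with $c_i \in \Q$. The first equality in the statement then reduces to showing that $f(\Z)\subseteq \Z_{(p)}$ if and only if every $c_i \in \Z_{(p)}$.

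For the forward direction I would introduce the difference operator $(\Delta g)(T) := g(T+1)-g(T)$ and note the identity $\Delta \binom{T}{i} = \binom{T}{i-1}$, so that iterating gives
\[ c_i \;=\; (\Delta^i f)(0) \;=\; \sum_{k=0}^{i} (-1)^{i-k}\binom{i}{k}\,f(k). \]
Because each $f(k)$ lies in $\Z_{(p)}$ and the binomial coefficients $\binom{i}{k}$ are integers, this displayed $\Z$-linear combination lands in $\Z_{(p)}$, forcing $c_i \in \Z_{(p)}$. Conversely, if $c_i \in \Z_{(p)}$ for every $i$, then for any $n \in \Z$ we have $f(n) = \sum_i c_i \binom{n}{i}$, a $\Z_{(p)}$-linear combination of the integers $\binom{n}{i}$, hence an element of $\Z_{(p)}$. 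This gives the first equality $\Int(\Z,\Z_{(p)}) = \bigoplus_i \Z_{(p)}\binom{T}{i}$.

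For the second equality, the inclusion $\bigoplus_i \Z_{(p)}\binom{T}{i}\subseteq \Z_{(p)}[\binom{T}{i}\mid i \in \N]$ is immediate. For the reverse inclusion, Lemma \ref{L1} already asserts that $\Z[\binom{T}{i}\mid i \in \N] = \bigoplus_i \Z\binom{T}{i}$, which in particular says every product $\binom{T}{i}\binom{T}{j}$ is a $\Z$-linear (hence a $\Z_{(p)}$-linear) combination of binomials. Consequently the $\Z_{(p)}$-module $\bigoplus_i \Z_{(p)}\binom{T}{i}$ is already closed under multiplication, so it coincides with the $\Z_{(p)}$-subalgebra of $\Q[T]$ generated by the $\binom{T}{i}$.

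I do not anticipate any real obstacle: the argument is essentially a scalar extension of Lemma \ref{L1}, and the only nontrivial input is the standard finite-difference inversion formula used to pin down the coefficients $c_i$.
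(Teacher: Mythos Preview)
Your proof is correct. It differs from the paper's argument, which is shorter but less explicit: the paper takes $f\in\Int(\Z,\Z_{(p)})$, claims one can find $d\in\Z\setminus p\Z$ with $df\in\Int(\Z)$, and then invokes Lemma~\ref{L1} to conclude $df\in\bigoplus_i\Z\binom{T}{i}$, whence $f\in\bigoplus_i\Z_{(p)}\binom{T}{i}$. The existence of such a $d$ coprime to $p$ is asserted with the remark that ``$f(\Z)$ does not involve denominators in $p\Z$''; to justify this rigorously one essentially needs to know that the binomial-basis coefficients $c_i$ already lie in $\Z_{(p)}$, which is precisely what your finite-difference computation $c_i=(\Delta^i f)(0)=\sum_k(-1)^{i-k}\binom{i}{k}f(k)$ establishes directly. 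So your approach is the more self-contained one: it bypasses the denominator-clearing step and produces the coefficients explicitly, at the cost of introducing the difference operator. The paper's approach, by contrast, packages everything as a one-line reduction to Lemma~\ref{L1}, which is slicker but leaves the key step to the reader.
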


\begin{proof}
$\supseteq$ is trivial, so let us assume $f\in \Int(\Z,\Z_{(p)})$. There exists some $d\in \Z\backslash p\Z$ such that $df\in \Int(\Z,\Z_{(p)})$ (since $f\in \Q[T]$ there  exist such $d\in\Z$, but since $f(\Z)$ does not involve denominators in $p\Z$, we can pick $d$ satisfying $(d,p)=1$). This means that $df\in \Int(\Z, \Z)= \Z\left[{T\choose i} ~\vline ~i\in \N\right ]$.
Hence, $f\in \Z_{(p)}\left[{T\choose i} ~\vline ~i\in \N\right]$.
\end{proof}

If $f\in \Z[{T\choose m}~|~m\in \N]$ then it makes sense to consider the map $\Z\lp \F_p$ given by $n\lp f(n)\mod p$. 
Also, if $r\in \Z_{(p)}$, then it makes sense to write down $r\mod p$ in the following way: 
if $r=\frac{a}{b}$ where $a\in \Z, b\in \Z\backslash p\Z$ then $r\mod p= (a\mod p)(b \mod p)^{-1}$. 

\begin{definition}\label{def2} Define $\tau: \Int(\Z, \Z_{(p)}) \lp \Maps(\Z, \F_p)$ by 
$\tau(f)(n)=f(n)\mod p$ for any $f\in \Int(\Z, \Z_{(p)})$. \\
We say that $f,g\in \Int(\Z, \Z_{(p)})$ are  {\em equivalent under $\tau$} if $\tau(f)=\tau(g)$. 
\end{definition}

\begin{remark}\label{RR} If $f\in \Int(\Z,\Z_{(p)})$ then there is some $g\in \Int(\Z)$ which is equivalent under $\tau$. 
\end{remark}

\begin{definition} Define $Q_i:={T\choose p^i}$. 
%We define a weight function $w(Q_i)=p^i$, and when viewing the $Q_i$ as separate variables, extend this to a weighted degree function $w$ on $\Q[Q_0,Q_1,\ldots]$. 
\end{definition}

\begin{proposition} \label{P3} Let $f\in \Int(\Z, \Z_{(p)})$ be of degree $d$. 
Then $f$ is equivalent to some $g\in \Z[Q_0,Q_1,\ldots, Q_r]$ where $r=[\log_p(d)]$. Furthermore, 
$g$ is at most of degree $p-1$ in each $Q_i$. 
\end{proposition}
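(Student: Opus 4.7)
The plan is to reduce to the case where $f$ is a $\Z$-linear combination of binomial coefficients $\binom{T}{i}$ with $i\leq d$, and then to express each such $\binom{T}{i}$ modulo $p$ as a monomial in the $Q_j$'s using Lucas' theorem. More precisely, by Remark \ref{RR} combined with Lemma \ref{L1}, we may replace $f$ by some $\tau$-equivalent element in $\Int(\Z,\Z) = \bigoplus_i \Z\binom{T}{i}$, so write $f = \sum_{i=0}^d c_i \binom{T}{i}$ with $c_i \in \Z$. Since $\tau$ is additive and $\Z$-linear, it suffices to show that each individual $\binom{T}{i}$ with $i \leq d$ is $\tau$-equivalent to a polynomial in $Z[Q_0,\ldots,Q_r]$ of degree at most $p-1$ in each $Q_j$.

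For this, I would first observe the basic identity that for any $n \in \Z_{\geq 0}$ with base-$p$ expansion $n = \sum_{j\geq 0} n_j p^j$, Lucas' theorem gives $Q_j(n) = \binom{n}{p^j} \equiv n_j \pmod{p}$. Next, given $i \leq d$ with base-$p$ expansion $i = i_0 + i_1 p + \cdots + i_r p^r$ (so $i_j \in \{0,\ldots,p-1\}$ and $r = [\log_p d]$ suffices), Lucas' theorem gives $\binom{n}{i} \equiv \prod_{j=0}^r \binom{n_j}{i_j} \pmod{p}$. Since $\binom{y}{i_j}$ is a polynomial of degree $i_j \leq p-1$ in $y$ with integer coefficients (one clears $i_j!$ using Wilson-type reasoning, or simply notes $\binom{y}{i_j} \in \Int(\Z)$), substituting $y = Q_j$ yields the desired representation: on non-negative integers, $\binom{T}{i}$ is $\tau$-equivalent to $\prod_{j=0}^r \binom{Q_j}{i_j}$.

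The main obstacle, and the place where care is needed, is that $\tau$-equivalence requires agreement on all of $\Z$, including negative integers, whereas Lucas' theorem is usually stated for non-negative integers. I would handle this by a periodicity argument. Using Vandermonde,
\[ \binom{T+p^s}{i} = \sum_{k=0}^i \binom{T}{k}\binom{p^s}{i-k}, \]
and since $p \mid \binom{p^s}{j}$ for $1 \leq j \leq p^s - 1$, we get $\binom{T+p^s}{i} \equiv \binom{T}{i} \pmod{p}$ as functions on $\Z$, provided $i < p^s$. Taking $s = r+1$ (so $p^s > d \geq i$), both sides of the Lucas identity are periodic functions of $n$ with period $p^{r+1}$. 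Since they agree on the complete set of representatives $\{0,1,\ldots,p^{r+1}-1\}$ by ordinary Lucas, they agree on all of $\Z$, giving $\tau$-equivalence.

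Finally, summing over $i$, we obtain $f \equiv_\tau g$ where $g = \sum_{i=0}^d c_i \prod_{j=0}^r \binom{Q_j}{i_j} \in \Z[Q_0,\ldots,Q_r]$. Every monomial appearing has degree at most $p-1$ in each $Q_j$ (since $i_j \leq p-1$), so the same is true of $g$ as a polynomial, completing the proof.
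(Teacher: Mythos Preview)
Your argument is correct and follows the same route as the paper: reduce to a $\Z$-linear combination of $\binom{T}{i}$ via the binomial basis, then use Lucas' theorem to rewrite each $\binom{T}{i}$ as the product $\prod_{j}\binom{Q_j}{i_j}$, where $i=\sum i_j p^j$. The paper's proof is essentially identical, invoking Corollary~\ref{C2} and Remark~\ref{RR} for the first step and Lucas for the second.

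One point worth noting: your periodicity argument via Vandermonde, showing that both $\binom{T}{i}$ and $\prod_j\binom{Q_j(T)}{i_j}$ are $p^{r+1}$-periodic modulo $p$ and hence agree on all of $\Z$ once they agree on $\{0,\dots,p^{r+1}-1\}$, actually fills a small gap that the paper leaves open. The paper's proof writes $m=\beta_0+\beta_1 p+\cdots+\beta_r p^r$ and applies Lucas, which as stated only covers $m\geq 0$; it then asserts the conclusion ``for each substitution $T=m\in\Z$'' without further comment. Your extra paragraph makes this rigorous. (A minor phrasing issue: $\binom{y}{i_j}$ does not literally have integer coefficients in $y$, but since $i_j!$ is a unit modulo $p$ one replaces $1/i_j!$ by an integer representative, which is what your parenthetical is getting at.)
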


The above proposition is based on Lucas' Theorem \cite{Lucas}:\\

\noindent
{\bf Lucas' Theorem:} Let $0\leq \alpha_i < p, 0\leq \beta_i <p$ where $\alpha_i,\beta_i\in \N$. Then 
\[ {\alpha_0+\alpha_1p+\alpha_2p^2+\ldots+\alpha_np^n \choose \beta_0+\beta_1p+\beta_2p^2+\ldots+\beta_np^n}
\equiv {\alpha_0 \choose \beta_0}{\alpha_1 \choose \beta_1}{\alpha_2 \choose \beta_2}\cdots {\alpha_n \choose \beta_n} \mod{p}.\]

\begin{corollary} \label{P4} Let $m\in \Z$, then  
$Q_i(m) = [ \frac{m}{p^{i}}] \mod{p}$, where $[{~}]$ is the floor (entier) function.
\end{corollary}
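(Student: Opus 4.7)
The plan is to deduce the statement directly from Lucas' theorem, which is precisely the computational tool placed right before the corollary. Since $Q_i(T) = \binom{T}{p^i}$, what must be shown is that $\binom{m}{p^i} \equiv \lfloor m/p^i \rfloor \pmod{p}$ for every $m \in \Z$.

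First I would handle the case $m \geq 0$. Write $m$ in base $p$ as $m = \alpha_0 + \alpha_1 p + \cdots + \alpha_n p^n$ with $0 \leq \alpha_j < p$, and note that the base-$p$ expansion of $p^i$ has digits $\beta_j = \delta_{ij}$. Lucas' theorem then yields
\[ \binom{m}{p^i} \equiv \binom{\alpha_i}{1} \prod_{j \neq i} \binom{\alpha_j}{0} = \alpha_i \pmod{p}. \]
On the other hand, $\lfloor m/p^i \rfloor = \alpha_i + \alpha_{i+1} p + \alpha_{i+2} p^2 + \cdots$, so $\lfloor m/p^i \rfloor \equiv \alpha_i \pmod{p}$. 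Comparing the two gives the corollary for $m \geq 0$.

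To extend to arbitrary $m \in \Z$, I would show that both sides, viewed as functions $\Z \to \F_p$, are periodic in $m$ with period $p^{i+1}$. For the right-hand side this is immediate since $\lfloor (m+p^{i+1})/p^i \rfloor = \lfloor m/p^i \rfloor + p \equiv \lfloor m/p^i \rfloor \pmod{p}$. For the left-hand side I would apply Vandermonde's identity
\[ \binom{m + p^{i+1}}{p^i} = \sum_{j=0}^{p^i} \binom{m}{p^i - j}\binom{p^{i+1}}{j}, \]
together with the standard $p$-adic estimate $v_p\!\left(\binom{p^{i+1}}{j}\right) = (i+1) - v_p(j) \geq 1$ for $1 \leq j \leq p^i$ (equivalently by Kummer's theorem, counting carries in the addition $j + (p^{i+1}-j) = p^{i+1}$). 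Thus every term with $j \geq 1$ vanishes mod $p$ and the $j=0$ term gives $\binom{m}{p^i}$, proving $Q_i(m + p^{i+1}) \equiv Q_i(m) \pmod{p}$. Since both sides of the desired equality are periodic with period $p^{i+1}$ and agree on the representatives $\{0, 1, \ldots, p^{i+1}-1\}$ by the non-negative case, they agree on all of $\Z$.

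Very little is difficult here: the main point is just unpacking Lucas. The one mildly delicate step is the extension to negative $m$, since Lucas is stated for non-negative integers; I would expect the referee to notice if it were omitted, but the Vandermonde/$v_p$ argument above dispatches it cleanly.
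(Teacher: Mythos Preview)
Your proof is correct and, for $m\geq 0$, is exactly the paper's argument: write $m$ in base $p$ and apply Lucas' theorem to read off $\binom{m}{p^i}\equiv\alpha_i\pmod p$, then note $\lfloor m/p^i\rfloor\equiv\alpha_i\pmod p$.

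The paper's proof stops there; it writes $m=\alpha_0+\alpha_1p+\cdots+\alpha_np^n$ with $0\le\alpha_i<p$ and applies Lucas, silently treating the negative case as if it were obvious. Your extension to $m<0$ via $p^{i+1}$-periodicity (Vandermonde plus the estimate $v_p\binom{p^{i+1}}{j}\ge (i+1)-v_p(j)\ge 1$ for $1\le j\le p^i$) is a genuine addition and is correct. So you have taken the same route as the paper but filled a gap the paper leaves implicit.
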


\begin{proof} Let $m=\alpha_0+\alpha_1p+\ldots+\alpha_np^n$. Then $[\frac{m}{p^i}]\mod{p}=\alpha_i$, 
and Lucas' theorem states that ${m\choose p^i}={\alpha_i\choose 1}=\alpha_i$. 
\end{proof}

\begin{proof}(of proposition \ref{P3}.) 
Let $f$ be as in the proposition. 
By corollary \ref{C2} $f$ is a $\Z_{(p)}$-linear combination of  ${T\choose 0}, {T\choose 1},\ldots, {T\choose d}$, 
which means by remark \ref{RR} that $f$ is equivalent to a $\Z$-linear combination of ${T\choose 0}, {T\choose 1},\ldots, {T\choose d}$. We will prove that $T\choose d$ is equivalent to an element of $\Z[Q_0,\ldots, Q_r]$ where $r=[\log_p(d)]$. 
Now, if $d=\alpha_0+\alpha_1p+\ldots+\alpha_rp^r$, and $m=\beta_0+\beta_1 p+\ldots + \beta_r p^r$ we use Lucas' Theorem again to derive the following: 
\[ \begin{array}{rl}
 {m\choose d}
&={\beta_0\choose \alpha_0}{\beta_1\choose \alpha_1}{ \beta_2\choose \alpha_2}\cdot\cdots \cdot { \beta_r\choose \alpha_r} \mod{p}.\\
\end{array} \]
Since $Q_i(m)=\beta_i$, we can state that the above is equal to 
\[ \begin{array}{rl}
\phantom{{m\choose d}}&={Q_0(m)\choose \alpha_0}{ Q_1(m)\choose \alpha_1}{Q_2(m)\choose \alpha_2}\cdot\cdots \cdot {Q_r(m)\choose \alpha_r} \mod{p}.
\end{array}\]
Now note that for $0\leq \alpha_i<p$, $\alpha_i\in \Z$, we can give 
\[ \frac{Q_i(T)(Q_i(T)-1)\ldots (Q_i(T)-\alpha_i+1)}{\alpha_i!}=:{Q_i(T)\choose \alpha_i}\in \Int(\Z, \Z_{(p)})\]
 and thus by the above considerations, we can state that ${T\choose d}\mod{p}$ coincides with
\[ {Q_0(T)\choose \alpha_0}{ Q_1(T)\choose \alpha_1}{Q_2(T)\choose \alpha_2}\cdot\cdots \cdot {Q_r(T)\choose \alpha_r} \mod{p}\]
for each substitution $T=m\in \Z$. This means exactly that the above functions are equivalent, proving the theorem. 
\end{proof}

\begin{corollary} \label{P5}
Let $f\in \Maps(\Z, \F_p)$ such that $f$ is periodic of order $p^r$. Then there exists $g\in \Z[Q_0,\ldots, Q_{r-1}]$ such that 
$\tau(g)=f$. 
\end{corollary}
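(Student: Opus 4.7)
The plan is to reduce to Proposition \ref{P3} by a standard interpolation argument, and then upgrade agreement on a single period to agreement on all of $\Z$ using periodicity of the $\tau(Q_i)$. First I would pick an arbitrary integer lift $\tilde f\colon\Z\lp\Z$ of $f$ and form the Newton forward-difference polynomial
\[ P(T):=\sum_{i=0}^{p^r-1}(\Delta^i\tilde f)(0)\binom{T}{i}\in\Int(\Z), \]
whose coefficients are integers because forward differences at $0$ of an integer-valued sequence are integers, and which satisfies $P(n)=\tilde f(n)$ for $n=0,1,\ldots,p^r-1$. In particular $\tau(P)(n)=f(n)$ on this set.

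Next, since $\deg P<p^r$, Proposition \ref{P3} produces $g\in\Z[Q_0,\ldots,Q_{r-1}]$ with $\tau(g)=\tau(P)$, so $\tau(g)(n)=f(n)$ for $n=0,\ldots,p^r-1$. It remains to show the two functions agree on all of $\Z$. For this I would verify that both are periodic of period $p^r$: this is assumed for $f$, and for $\tau(g)$ it suffices to prove $\tau(Q_i)$ has period $p^{i+1}$ for each $0\le i\le r-1$, since then $\tau(g)$ has period dividing $\mathrm{lcm}(p,p^2,\ldots,p^r)=p^r$.

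The periodicity claim boils down to the congruence
\[ \binom{n+p^{i+1}}{p^i}\equiv\binom{n}{p^i}\pmod p\qquad(n\in\Z), \]
which I would derive via Vandermonde:
\[ \binom{n+p^{i+1}}{p^i}=\sum_{j=0}^{p^i}\binom{n}{j}\binom{p^{i+1}}{p^i-j}, \]
and then observe, using Lucas' theorem, that $\binom{p^{i+1}}{k}\equiv0\pmod p$ for $0<k<p^{i+1}$, leaving only the $j=p^i$ term. Since $\tau(g)$ and $f$ are both $p^r$-periodic and agree on $\{0,\ldots,p^r-1\}$, they agree on $\Z$, giving $\tau(g)=f$.

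The one subtle point — arguably the main obstacle — is that Corollary \ref{P4} only interprets $Q_i$ at non-negative integers, so one cannot conclude periodicity of $\tau(Q_i)$ over $\Z$ directly from the base-$p$ digit picture. The Vandermonde/Lucas argument above is needed precisely to handle negative $n$ uniformly. Once this is in place, the rest is routine assembly of Newton interpolation and Proposition \ref{P3}.
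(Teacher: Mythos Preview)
Your argument is correct, but it takes a different route from the paper.  The paper's proof sketch constructs $g$ directly: using Corollary~\ref{P4} (that $Q_i(m)$ records the $i$-th base-$p$ digit of $m$), one builds, from polynomials in $Q_0,\ldots,Q_{r-1}$ of degree $\le p-1$ in each, indicator functions for each residue class modulo $p^r$, and then takes the appropriate linear combination to match $f$.  Your approach instead interpolates by Newton's formula and invokes Proposition~\ref{P3}, which has the virtue of reusing an already-proved reduction rather than repeating a Lucas-type computation; the price is the separate periodicity verification.  Your Vandermonde/Lucas check that $\tau(Q_i)$ has period $p^{i+1}$ is in fact more careful about negative $n$ than the paper: the proof of Corollary~\ref{P4} tacitly writes $m=\alpha_0+\alpha_1 p+\cdots$ with $0\le\alpha_j<p$, which presupposes $m\ge 0$, so your argument supplies exactly what is glossed over there.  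Either method yields the same conclusion; yours leans on earlier machinery, the paper's is a hands-on basis construction.
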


\begin{proof} Proof sketch: using corollary \ref{P4}, we can take a linear combination of\\ $1,Q_i(T), Q_i(T)^2, \ldots, Q_i(T)^{p-1}$
to obtain a function which is periodic of period $p^i$, and is 1 on a particular interval $[ap^{i-1}, ap^{i-1}+(p-1)]$ where $a\in \{0,\ldots,p-1\}$ and 0 on other elements in $[0,p^i-1]\cap \Z$. Using these functions as a basis, the corollary follows easily. 
\end{proof} 

\section{$\Z$-flows }
\label{vier}

\subsection{$\Z$-flows of LFPEs}

Over a field $K$ of characteristic zero, given a strictly triangular polynomial map $F$, then it is always possible to give a formula for exponents $F^m$ of $F$, to be more precise:
 there is a strictly triangular polynomial map $F_T\in \GA_n(K[T])$ such that $F_m=F^m$ for each $m\in \N$. To give a simple (even linear) example: 

\begin{example} \label{Ex1} Let $F=(x+y+z, y+z, z)$, if $F_T:=(x+Ty+\frac{1}{2}(T^2+T)z, y+Tz, z)$, then $F_m=F^m$ for each $n\in \N$. 
\end{example}

However, if one picks $K$ a field of characteristic two, and considers the same map $F:=(x+y+z, y+z, z)$, then one runs into trouble
defining $F_T$, as it includes the polynomial $\frac{1}{2}(T^2+T)$. We will try to solve this problem, by introduction of so-called $\Z$-flows. To define this, we need some preparation.

\begin{definition} Given $k$ a field of characteristic $p$, define $B_n:=k[\tilde{Q}_0,\tilde{Q}_1,\ldots, \tilde{Q}_{n-1}]$ for $n\in \N$ where the $\tilde{Q}_i$ 
are independent variables, and $B:=\cup_{n\in \N} B_n$. Define $S_n:=B_n/\J_n$ where $\J_n:=(\tilde{Q}_i^p-\tilde{Q}_i|1\leq i\leq n)$ for all $n\in \N$, and $\J:=\cup_{n\in \N} \J_n$ and $S:=\cup_{n\in \N}S_n=B/\J$. We will write
$Q_i:=\tilde{Q}_i+\J$ or $Q_i:=\tilde{Q}_i+\J_n$. We can embed $S$ into $\Maps(\Z, k)$ by sending $Q_i$ to the map ${T\choose p^i}$. We will identify $S$ with its image in $\Maps(\Z,k)$ and identify  $Q_i$ with the elements ${T\choose p^i}$ in $\Maps(\Z, \F_p)$, all as described in section \ref{1var}.
\end{definition}

Indeed, notice that for each $m\in \Z$ we can define $Q_i(m)\in \F_p$, and since $k$ and $S_n$ are $\F_p$-modules we get a substitution map $\varphi_m:S_n\lp k$ 
\[ p(Q_0,\ldots, Q_{n-1})\lp p(Q_0(m),\ldots,Q_{n-1}(m)). \]
This map can be extended to a map 
\[ \varphi_m:\GA_n(S)\lp \GA_n(k). \]
%Let us define $F_m:=\varphi_T(F_T)$. 

\begin{definition} \label{flow} Let $F\in \GA_n(k)$ where $k$ is a field of characteristic $p$. 
Then we say $F_T\in \GA_n(S)$ is a $\Z$-flow of $F$ if $\varphi_m(F_T)=F^m$ for all $m\in \Z$.
\end{definition}

There is no real obstruction to extending the above definition to characteristic zero, but that would coincide with the already existing notion of ``locally nilpotent derivation'' and ``additive group action'', adding a third name to the list (and more confusion and obfuscation). In characteristic $p$ introduction of $\Z$-flows makes sense, as here the concepts of ``locally nilpotent derivation'', ``locally finite iterative higher derivation'' are truly different.  ``Additive group action'' normally means ``$(k,+)$-action'' whereas ``$\Z$-flow'' coincides with ``$(\Z,+)$ action''. 

Note that, getting back to example \ref{Ex1} in characteristic 2, we can define a $\Z$-flow 
$F_T:=(x+Q_0y+(Q_1+Q_0)z, y+Q_0z,z)$. 

 Even though definition \ref{flow} is for any field of characteristic $p$, we will now restrict to the case that $k$ is a finite field, and leave the generic case to a future paper.

\begin{remark}
(i) It is not true that all $F\in \GA_n(\F_p)$ are in the image of a $\Z$-flow,\footnote{Just as there does not always exist a locally nilpotent derivation $D$ such that $F=\exp(D)$ in the characteristic zero case.} as one needs $F$ to be locally finite (see below for the definition of this term), i.e. $\deg(F^m)$ is bounded as $m$ changes.\\
(ii) Locally iterative higher derivations correspond to $F_T\in \GA_n(\F_p[t])=\GA_n(S_1)$ such that $F^m=F_m$, i.e. a special subclass of $\Z$-flows (namely those having period $p$ - the additive $\F_p$ actions).\\
(iii) The wording $\Z$-flow comes from the analytic case: If $F$ is a holomorphic map $\C^n\lp \C^n$, then under some circumstances one can
define a holomorphic map $F_T: \C\times \C^n\lp \C^n$ such that $F_aF_b=F_{a+b}$ for each $a,b\in \C$, $F_1=F$ and $F_0=I$. 
Then $F_T$ is called a flow of $F$. \\
(iv) A $\Z$-flow is unique to $F$: if $F_T$ and $G_T$ are two $\Z$-flows, then $\varphi_m(F_T-G_T)=0$ for each $m$, which means that each coefficient is the zero function $\Z\lp k$, which corresponds to the zero element in $S$. 
\end{remark}

We will now explain for which polynomial maps there exist $\Z$-flows. 

\begin{definition} Let $F\in \MA_n(k)$ where $k$ is a field. Then we say that $F$ is a locally finite polynomial endomorphism (short LFPE) if $\{\deg(F^m)\}_{m\in \N}$ is bounded. \end{definition}

We quote results from theorem 1.1 and proposition 2.1 from \cite{Fu-Mau}: 

\begin{theorem}~\\ \label{LFPE1}
(i) $F\in\MA_n(k)$ is an LFPE if and only if there exist a linear dependence relation $F^d=a_{d-1}F^{d-1}+\ldots+a_1F+a_0 I$ where $a_i\in k$. Stating that $X^n-a_{d-1}X^{n-1}-\ldots -a_1X-a_0$ is a {\bf vanishing polynomial for $F$} in such a case, the set of vanishing polynomials for $F$ forms an ideal in $k[X]$.\\
(ii) $F\in \MA_n(k)$ is an LFPE which has a vanishing polynomial of the form $(X-1)^d$ for some $d\in \N$, if and only if $F=\exp(D)$ for some locally nilpotent derivation $D$ on $k[X_1,\ldots,X_n]$.
\end{theorem}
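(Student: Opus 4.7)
For part (i), the plan is standard linear algebra. For the backward direction, if $F^d = a_{d-1}F^{d-1}+\ldots+a_0 I$ in $\MA_n(k)$, then an easy induction shows that every $F^m$ with $m\geq d$ lies in the $k$-span of $I,F,\ldots,F^{d-1}$; since each $F^i$ has some fixed degree, the set $\{\deg(F^m)\}_{m\in\N}$ is bounded by $\max_{0\leq i<d}\deg(F^i)$. For the forward direction, if $\{\deg(F^m)\}$ is bounded by some $N$, then all iterates $F^m$ lie inside the finite-dimensional $k$-subspace $V\subseteq (k[X_1,\ldots,X_n])^n$ consisting of $n$-tuples of polynomials of degree at most $N$. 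An infinite family in a finite-dimensional space is linearly dependent, so after clearing the first nonzero leading coefficient we get a relation $F^d+\sum_{i<d}a_i F^i=0$. The ideal claim follows from bilinearity: if $p(F)=0$ and $q(F)=0$, then $(\alpha p+\beta q)(F)=0$ and $(rp)(F)=r(F)\,p(F)=0$ for any $r\in k[X]$ (because the $F^i$ commute under composition).

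For part (ii), I would argue separately. For the backward direction, suppose $F=\exp(D)$ with $D$ a locally nilpotent derivation. Since $D$ is locally nilpotent, there exists $N$ with $D^N(X_i)=0$ for all $i$. Then for each $i$,
\[
F^m(X_i)=\exp(mD)(X_i)=\sum_{k=0}^{N-1}\frac{m^k}{k!}\,D^k(X_i),
\]
which is a polynomial in $m$ of degree at most $N-1$ with coefficients in the ring. Applying the standard fact that the $N$-th forward finite difference annihilates polynomials in $m$ of degree less than $N$, we get
\[
(F-I)^N(X_i)=\sum_{k=0}^{N}(-1)^{N-k}\binom{N}{k}F^k(X_i)=0,
\]
so $(X-1)^N$ is a vanishing polynomial for $F$ and in particular $F$ is an LFPE.

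For the forward direction, assume $F$ has vanishing polynomial $(X-1)^d$, i.e.\ $(F-I)^d=0$ in $\MA_n(k)$ (equivalently, as an operator on the coordinate ring via $f\mapsto f\circ F$). Define
\[
D:=\sum_{k=1}^{d-1}\frac{(-1)^{k-1}}{k}(F-I)^k,
\]
the truncated formal $\log(I+(F-I))$, which is well defined because the characteristic is zero (implicit in the use of $\exp$) and the series terminates. One then verifies: (a) $D$ is a derivation, by checking the Leibniz rule on products $fg$ using the formal identity between $\exp$ and $\log$ applied to the commuting operators $F$ and $I$; (b) $D$ is locally nilpotent, because the $(F-I)^k$ all act trivially for $k\geq d$ on every polynomial, so on any element $D^d=0$; (c) $\exp(D)=F$, again by formal power-series identities, exploiting that all series truncate since $(F-I)^d=0$.

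The main obstacle is step (a): one must treat composition-with-$F$ as a ring endomorphism $F^{\ast}$ of $k[X_1,\ldots,X_n]$ and rigorously show that the truncated logarithm of a ring endomorphism satisfying $(F^{\ast}-\mathrm{id})^d=0$ is automatically a derivation. The cleanest route is to reduce it to a formal identity in the free noncommutative algebra, then apply it to the single commuting element $F^{\ast}-\mathrm{id}$; once this is in place, (b) and (c) are essentially formal manipulations of truncated series. Uniqueness of $D$ (used implicitly) follows because $\log$ and $\exp$ are inverse bijections between the nilpotent derivations and the unipotent ring endomorphisms in this characteristic-zero setting.
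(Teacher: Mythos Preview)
The paper does not prove this theorem at all: it is quoted verbatim from \cite{Fu-Mau} (theorem~1.1 and proposition~2.1 there), so there is no ``paper's own proof'' to compare against. I will therefore just assess your argument.

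Part~(i) is fine in outline. One remark: your justification that the vanishing polynomials form an ideal, ``$(rp)(F)=r(F)\,p(F)=0$'', is not literally correct, since composition in $\MA_n(k)$ is only $k$-linear on the \emph{right} (in general $H\circ(G_1+G_2)\neq H\circ G_1+H\circ G_2$), so $r(F)\,p(F)$ is not even well defined as a product. The correct argument is: if $p(F)=\sum_j p_jF^j=0$ then right-composing with $F$ (which \emph{is} linear) gives $\sum_j p_j F^{j+1}=(Xp)(F)=0$, and then linearity in $r$ finishes it.

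Part~(ii), forward direction, has a genuine gap. You assert that ``$(F-I)^d=0$ in $\MA_n(k)$'' is \emph{equivalent} to ``$(F^\ast-e^\ast)^d=0$ as an operator on $k[X_1,\ldots,X_n]$''. This is false. The vanishing-polynomial condition says exactly that $(F^\ast-e^\ast)^d$ kills each generator $X_j$, but $(F^\ast-e^\ast)^d$ is not a ring homomorphism, so it need not kill products. Concretely, take $n=1$, $F=X+1$: then $(X-1)^2$ is a vanishing polynomial for $F$, yet $(F^\ast-e^\ast)^2(X^2)=(X+2)^2-2(X+1)^2+X^2=2\neq 0$. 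Consequently your truncated logarithm $D=\sum_{k=1}^{d-1}\frac{(-1)^{k-1}}{k}(F^\ast-e^\ast)^k$ is \emph{not} the right object, and your claim ``$D^d=0$'' fails. What \emph{is} true is that $F^\ast-e^\ast$ is \emph{locally} nilpotent: since each $(F^m)_j$ is a polynomial in $m$ of degree $<d$ (this is what the recurrence with characteristic polynomial $(X-1)^d$ gives), any $g$ of degree $e$ has $g(F^m)$ polynomial in $m$ of degree $<de$, hence $(F^\ast-e^\ast)^{de}(g)=0$. One must therefore define $D:=\log F^\ast=\sum_{k\geq 1}\frac{(-1)^{k-1}}{k}(F^\ast-e^\ast)^k$ as a \emph{locally finite} sum; then your steps (a)--(c) go through (the tensor-product argument $\log(F^\ast\otimes F^\ast)=D\otimes e^\ast+e^\ast\otimes D$ gives Leibniz cleanly), and $D$ is locally nilpotent rather than nilpotent. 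Your backward direction is correct as written.
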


There is a conjectural equivalence between LFPEs and exponents of ``locally finite derivations''\footnote{Derivations which are locally finite linear maps on $k[X_1,\ldots,X_n]$. They include the locally nilpotent derivations, and their exponential maps exist if the field $k$ is closed under taking exponentials $x\lp e^x$, like $\R$ or $\C$.} for  $k=\C$. But for the special case of finite fields, we can give a very clear connection between LFPEs and $\Z$-flows:

\begin{proposition} 
Let $F\in \GA_n(\F_q)$ where $q=p^m$ for some $m\in \N^*$. Then for any $F\in \GA_n(k)$ we have equivalence between\\
(i) $F$ is an LFPE,\\
(ii) there exists $F_T\in \MA_n(S)$  which is a $\Z$-flow of $F$.
\end{proposition}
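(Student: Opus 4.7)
The proof naturally splits into the two implications. The direction (ii) $\Rightarrow$ (i) is essentially a degree-tracking argument, while (i) $\Rightarrow$ (ii) requires an explicit construction of the flow from the periodicity of the iterates of $F$ over a finite field.

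For (ii) $\Rightarrow$ (i), suppose $F_T\in \MA_n(S)$ is a $\Z$-flow of $F$. Write $F_T=(F_{T,1},\ldots,F_{T,n})$ with each $F_{T,i}\in S[X_1,\ldots,X_n]$, and let $d=\max_i \deg_{X}(F_{T,i})$. For every $m\in \Z$ the substitution $\varphi_m:S\lp \F_q$ is a ring homomorphism applied coefficient-wise, so $\deg(\varphi_m(F_{T,i}))\leq \deg(F_{T,i})\leq d$. Since $\varphi_m(F_T)=F^m$ by hypothesis, $\deg(F^m)\leq d$ for all $m$, which is exactly the LFPE property.

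For (i) $\Rightarrow$ (ii), the key fact is that since $\F_q$ is finite, boundedness of $\{\deg(F^m)\}_{m\in \N}$ forces $\{F^m : m\in\N\}$ to be a finite subset of $\GA_n(\F_q)$. Invertibility of $F$ then gives a period $N\in \N^*$ with $F^{N}=I$, and we may extend to $m\in \Z$ using $F^{-1}=F^{N-1}$. Expanding each component $F^m_i\in \F_q[X_1,\ldots,X_n]$ in the (finite) set of monomials of bounded degree, write $F^m_i=\sum_\alpha c_{i,\alpha}(m)X^\alpha$, with each coefficient $c_{i,\alpha}:\Z\lp \F_q$ a periodic function of period dividing $N$. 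The task reduces to lifting each $c_{i,\alpha}$ to an element of $S$.

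The construction proceeds via corollary \ref{P5}. Choose an $\F_p$-basis $e_1,\ldots,e_m$ of $\F_q$, decompose $c_{i,\alpha}=\sum_j c_{i,\alpha}^{(j)}e_j$ with $c_{i,\alpha}^{(j)}:\Z\lp \F_p$ of the same period, and apply corollary \ref{P5} to each $\F_p$-valued coordinate to obtain $g_{i,\alpha}^{(j)}\in \F_p[Q_0,\ldots,Q_{r-1}]$ with $\tau(g_{i,\alpha}^{(j)})=c_{i,\alpha}^{(j)}$, where $r$ is chosen so $p^r$ majorises the period. Setting $g_{i,\alpha}:=\sum_j g_{i,\alpha}^{(j)}e_j\in S$ and $F_{T,i}:=\sum_\alpha g_{i,\alpha}X^\alpha$ gives $F_T\in \MA_n(S)$. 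By construction $\varphi_m(F_T)=F^m$ for every $m\in \Z$, so $F_T$ is a $\Z$-flow of $F$; the invertibility in $\GA_n(S)$ follows because $F_T\circ F_{-T}$ agrees with $I$ after every substitution $\varphi_m$, and by part (iv) of the remark preceding the theorem this forces $F_T\circ F_{-T}=I$.

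\textbf{Main obstacle.} The delicate point is the appeal to corollary \ref{P5}: it only represents functions whose period is a power of $p$. One must therefore argue that all periods arising as periods of the coefficient functions $c_{i,\alpha}$ are $p$-powers, or absorb the prime-to-$p$ part of $N$ into the construction. The natural way to handle this is to replace $r$ by a sufficiently large exponent so that $p^r$ accommodates the relevant period after base change to $\F_q$, exploiting that $S$ is defined over $k=\F_q$ and not merely over $\F_p$; this is the step that most genuinely uses the finite-field hypothesis and where the work of Section \ref{1var} pays off.
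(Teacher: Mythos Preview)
Your (ii)$\Rightarrow$(i) is correct and is exactly the paper's argument.

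For (i)$\Rightarrow$(ii), your route via finiteness of $\{F^m\}$ over $\F_q$ is close to the paper's: the paper instead invokes a vanishing polynomial $P$ of $F$, asserts that $P$ divides $T^{q^r}-T$ for some $r$, concludes $F^{q^r}=F$, and then sets $F_T:=\sum_i f_i\,F^i$ where $f_i\in S$ is the indicator of $i\pmod{q^r}$ supplied by corollary~\ref{P5}. Your monomial-by-monomial Lagrange reconstruction is a repackaging of the same interpolation.

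The gap you flag in your last paragraph is real, and your proposed repair does not work. Every element of $S$ has period a power of $p$: an element of $S_r$ is a polynomial over $k=\F_q$ in $Q_0,\dots,Q_{r-1}$, and by corollary~\ref{P4} one has $Q_i(m)=\lfloor m/p^i\rfloor\bmod p$, so such a polynomial depends on $m$ only through $m\bmod p^r$. Enlarging $r$ or passing from $\F_p$-coefficients to $\F_q$-coefficients does not alter this. Hence if some coefficient function $c_{i,\alpha}$ has a period with a nontrivial prime-to-$p$ part, it cannot be lifted to $S$ at all. Concretely, $F=(2x_1)\in\GA_1(\F_3)$ is a (linear) LFPE, yet a $\Z$-flow would require $c\in S$ with $c(m)=2^m$, a function of exact period $2$; any function that is simultaneously of period $2$ and of period $3^r$ is constant, a contradiction.

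The paper's argument runs into the same wall. From $F^{q^r}=F$ one obtains $F^{q^r-1}=I$, so the order of $F$ divides $q^r-1$ and is \emph{coprime} to $p$; the paper's interpolant with $f_i$ of period $q^r$ then gives $\varphi_m(F_T)=F^{m\bmod q^r}$, which equals $F^m$ for all $m$ only if $F^{q^r}=I$, not merely $F^{q^r}=F$. (Separately, the claimed divisibility $P\mid T^{q^r}-T$ already fails whenever $P$ has a repeated root, e.g.\ for $F=(x_1+1)$, whose minimal vanishing polynomial is $(T-1)^2$.) In short, both your argument and the paper's go through cleanly precisely when $F$ has $p$-power order --- which is the case for the strictly triangular maps that are the paper's actual focus --- but not in the generality stated.
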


\begin{proof} 
If there exists a $\Z$-flow $F_T$ for $F$, then $F^m=\varphi_m(F_T)$ for each $m\in \Z$. This means that $\deg(F^m)$ is bounded by $\deg(F_T)$ and thus is $F$ an LFPE.

Now assume $F$ is an LFPE. 
Theorem \ref{LFPE1} (i) gives that  there exists some $d\in \N$ and a linear dependence relation $F^d=a_{d-1}F^{d-1}+\ldots+a_1F+a_0 I$ where $a_i\in \F_q$, i.e. $P(T):=T^d-a_{d-1}T^{d-1}-\ldots -a_0$ is a vanishing polynomial of $F$.
Since we are working over a finite field, there exists a polynomial $Q(T)$ such that $P(T)Q(T)=T^{q^r}-T$ for some $r\in \N^*$. By theorem \ref{LFPE1} (i) this means that $T^{q^r}-T$ is also a vanishing polynomial for $F$, i.e. $F^{q^r}=F$. Now using corollary \ref{P5} we find  $f_i\in S\subset \Maps(\Z, \F_q)$ having order $q^r$ sending $i$ to 1 and all other elements in the interval $[0, q^r-1]$ to 0. Now define $F_T:=\sum_{i=0}^{p^r-1} f_i F^i$. It is clear that $\varphi_m(F_T)=F^m$ for any $m\in \Z$ by construction.
\end{proof}

Over infinite fields, the connection between $\Z$-flows and LFPEs is not that all-encompassing: if $F=(\lambda x_1)\in \GA_1(k)$ where $\lambda\in k$ is no root of unity (i.e. $\lambda$ has no torsion), then $F$ is an LFPE which is not a $\Z$-flow, as there is no function in $S$ which equals $m\lp \lambda^m$. We expect that there is a deeper connection of $\Z$-flows similar to theorem \ref{LFPE1} (ii)  but that will require more research and will be addressed in a future paper.

We will also define $\Z$-flows in the following setting: Let $k=\F_q$ where $q=p^m$, $m\in \N^*$. Then we have the ring $S\subset \Maps(\Z, k)$. Given an element $F\in \MA_n(S)$, one can now consider the map induced by $F$, namely $\pi_q(F)\in \Maps(\Z\times \F_q^n,\F_q^n)$. We thus have a map 
\[ \pi_q: \MA_n(S) \lp \Maps(\Z\times \F_q^n,\F_q^n) .\]
Again we define $\varphi_m$ as the map substituting $m$ into the first component, i.e. 
\[ \varphi_m: \Maps(\Z\times \F_q^n,\F_q^n)\lp \Maps( \F_q^n,\F_q^n)\]

\begin{definition} 
Let $\sigma\in \pi_q(\GA_n(\F_q)$. Then we say that $\sigma_T\in \pi_q(\GA_n(S))$ is a $\Z$-flow of $\sigma$ if 
$\varphi_m(\sigma_T)=\sigma^m$ for all $m\in \N$. 
\end{definition}

\subsection{More general triangular groups}
\label{vierpunttwee}

If one has a ring $K$, then one can make the group $\textup{B}_n(K)$ and $\textup{B}^0_n(K)$ as described in section \ref{tpm}. But, it is possible to make slightly less intuitive groups: 
suppose that $K_1\subseteq K_2\subseteq \ldots \subseteq K_n\subseteq K$ is a chain of rings. Then one can make the set 
\[ \left\{(X_1+g_1,X_2+g_2,\ldots, X_n+g_n) ~\vline ~ g_i\in K_i[X_1,\ldots, X_{i-1}] \right\}\]
which becomes a subgroup of $\textup{B}^0_n(K)$. However, one can even make this work for more general subsets of $K$ which are not necessarily subrings.
First of all, if $f\in K[X_1,\ldots, X_n]$ and $F\in K[X_1,\ldots,X_n]^n$ then we can define $f\circ F=f(F_1,\ldots, F_n)$, which is needed below.

\begin{definition} Let $K$ be a ring and for $1\leq i \leq n$ let $W_i$ be a subgroup of $(K[X_1,\ldots, X_{i-1}],+)$ such that 
\[ W_i\circ (X_1+W_1,X_2+W_2, \ldots, X_i+W_i)\subseteq W_i .\]
Then define 
\[ \B(W_1,W_2,\ldots,W_n):=\left\{ (X_1+g_1,\ldots, X_n+g_n) ~\vline ~g_i\in W_i\right \} \]
which is a subset of $\B_n(K)$. 
\end{definition}

\begin{lemma}  $\B(W_1,W_2,\ldots,W_n)$ is a subgroup of $\B_n(K)$.
\end{lemma}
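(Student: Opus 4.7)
The plan is to verify the three subgroup axioms directly, using the closure hypothesis $W_i\circ (X_1+W_1,\ldots,X_i+W_i)\subseteq W_i$ together with the fact that each $W_i$ is an additive subgroup (so in particular $0\in W_i$ and $W_i=-W_i$).

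First I would check that the identity lies in $\B(W_1,\ldots,W_n)$: since $0\in W_i$ for every $i$, the map $(X_1,\ldots,X_n)=(X_1+0,\ldots,X_n+0)$ belongs to the set. Next, for closure under composition, take $F=(X_1+f_1,\ldots,X_n+f_n)$ and $G=(X_1+g_1,\ldots,X_n+g_n)$ with $f_i,g_i\in W_i$. Because $f_i$ involves only $X_1,\ldots,X_{i-1}$, one computes
\[ (F\circ G)_i = G_i+f_i(G_1,\ldots,G_{i-1}) = X_i+g_i+f_i\circ(X_1+g_1,\ldots,X_{i-1}+g_{i-1}). \]
The hypothesis gives $f_i\circ(X_1+g_1,\ldots,X_{i-1}+g_{i-1})\in W_i$ (the irrelevant $X_i$-slot can be filled arbitrarily, e.g.\ with $X_i$, which lies in $X_i+W_i$ since $0\in W_i$), and adding $g_i\in W_i$ keeps us in $W_i$ since $W_i$ is an additive group.

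For inverses, I would construct the inverse coordinate by coordinate. Given $F=(X_1+f_1,\ldots,X_n+f_n)$ with $f_i\in W_i$, solving $F\circ G=I$ gives the recursion
\[ g_i = -f_i\circ(X_1+g_1,\ldots,X_{i-1}+g_{i-1}). \]
By induction on $i$: assuming $g_1\in W_1,\ldots,g_{i-1}\in W_{i-1}$, the closure hypothesis gives $f_i\circ(X_1+g_1,\ldots,X_{i-1}+g_{i-1})\in W_i$, and since $W_i$ is a subgroup we also have $g_i\in W_i$. The base case $g_1=-f_1\in W_1$ is immediate. The polynomial map $G$ so constructed lies in $\B(W_1,\ldots,W_n)$ and is a two-sided inverse of $F$ (the computation $G\circ F=I$ is the same one in reverse, or one can invoke the standard fact from \cite{FreudenburgBoek} that a strictly triangular map has a unique strictly triangular inverse in $\B^0_n(K)$).

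There is no real obstacle here; the only subtlety is justifying that the closure hypothesis, which is phrased with $i$ slots, may be applied to polynomials in only $i-1$ variables, which is automatic because the $i$-th slot is not read. The argument is a clean induction and the three axioms follow in order.
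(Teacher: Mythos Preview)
Your proof is correct. The identity and composition steps match the paper's argument essentially verbatim (your formula for the $i$-th component of $F\circ G$ is in fact more accurate than the paper's, which drops the $h_i$ term in its notation but arrives at the same conclusion).

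For inverses you take a slightly different route. You write down the explicit recursion $g_i=-f_i\circ(X_1+g_1,\ldots,X_{i-1}+g_{i-1})$ and verify by induction on $i$ that each $g_i\in W_i$, using the closure hypothesis and $-W_i=W_i$. The paper instead factors $F$: it observes that a map changing only the $i$-th coordinate, $(X_1,\ldots,X_{i-1},X_i+g_i,X_{i+1},\ldots,X_n)$, has inverse $(X_1,\ldots,X_{i-1},X_i-g_i,X_{i+1},\ldots,X_n)\in\B(W_1,\ldots,W_n)$, and then inducts on the number of nontrivial coordinates by peeling off one such factor at a time and using closure under composition. Your approach is more direct; the paper's has the side benefit of exhibiting the decomposition $F=(X_1+g_1)(X_2+g_2)\cdots(X_n+g_n)$, which is reused later in Section~\ref{vijf} to evaluate $\varphi^{-1}$ without storing it separately. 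Either way the substance is the same: the closure hypothesis plus $W_i$ being an additive subgroup.
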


\begin{proof} The fact that the identity is in $ \B(W_1,\ldots,W_n)$ follows from the fact that $W_i$ is a subgroup and hence contains 0. 

We show that $\B(W_1,\ldots, W_n)$ is closed under composition: Let $G:=(X_1+g_1,\ldots, X_n+g_n), H:=(X_1+h_1,\ldots, X_n+h_n)\in \B(W_1,\ldots, W_n)$. Then $f_i:=g_i(H)\in W_i$ by assumption, and thus $G\circ H=(X_1+f_1,\ldots, X_n+f_n)\in \B(W_1,\ldots, W_n)$. 

We prove by induction that every element contains an inverse: $(X_1,\ldots, X_{i-1}, X_i+g_i, X_{i+1},\ldots, X_n)$ has an inverse $(X_1,\ldots, X_{i-1}, X_i+g_i, X_{i+1},\ldots, X_n)$. Now assume that all elements of the form $(X_1,\ldots, X_i, X_{i+1}+g_{i+1},\ldots, X_n+g_n)$ do have an inverse in $\B(W_1,\ldots, W_n)$. Pick $F:=(X_1,\ldots, X_{i-1},X_i+g_i, X_{i+1}+g_{i+1}, \ldots, X_n)$. Now $G:=(X_1,\ldots, X_{i-1}, X_i+g_i,X_{i+1},\ldots, X_n)\in \B(W_1,\ldots, W_n)$, and $FG^{-1}$ is invertible by induction. 
\end{proof}

Since one has a group homomorphism $\B_n^0(K)\lp \perm(K^n)$, there exists also a group homomorphism $\B(W_1,\ldots,W_n)\lp \perm(K^n)$.  We study the special case that $K$ is an $\F_p$-algebra  such that $r=r^p$ for each $r\in K$. 
(Given an $\F_p$-algebra, one can get such an algebra by modding out the kernel of the frobenius endomorphism $r\lp r^p$; one could also say that such an algebra is an $\F_p$ algebra with Frobenius automorphism being the identity.)

Now the map $\B^0 (S) \lp \perm(S^n)$ is a restriction of the map 
\[ \tau: S[X_1,\ldots,X_n]^n\lp S[x_1,\ldots,x_n]^n\lp \Maps(S^n,S^n)\]
 and thus it makes sense to write down $\BB_n(S)$, and we denote elements in this group like $\sigma:=(x_1+g_1,\ldots,x_n+g_n)$ where $g_i\in S[x_1,\ldots,x_n]$. Thus, we can also define the subgroup 
\[\BB(W_1,\ldots,W_n)\subset \BB_n(S)\]
where $W_i\subset S[x_1,\ldots,x_{i-1}]$. (Normally we should define this as $W_i\subseteq S[X_1, \ldots, X_{i-1}]$, but the groups coincide modulo $(X_1^p-X_1,\ldots,X_n^p-X_n)$ so this notation makes sense.)

In this article there are two such groups that we consider: remember that we defined  
$R_m:=\F_p[x_1,x_2,\ldots,x_m]$, $S_i:=\F_p[Q_0,\ldots,Q_{i-1}]/\j$ where $\j$ is generated by the $Q_j^p-Q_j$, $0\leq j\leq i-1$ and note that $S_iR_j=S_i\otimes R_j=S_i[x_1,\ldots,x_j]$. 
We will consider $\BB(S_1R_0, S_2R_1,\ldots,S_nR_{n-1})$ and the one mentioned in the next lemma.  

\begin{lemma} \label{vierpuntvier}
If $W_i:=S_{i-1}R_{i-1}+\F_pQ_{i-1}$, then $W_i\circ (x_1+W_1,\ldots,x_{i-1}+W_{i-1})\subseteq W_i$. 
Hence, $\BB(W_1,\ldots, W_n)$ is a subgroup of $\BB(S_1R_0,\ldots, S_nR_{n-1})$ and of $\BB_n(S_n)$. 
\end{lemma}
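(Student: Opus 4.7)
The plan is to verify the closure hypothesis $W_i \circ (x_1+W_1,\ldots,x_{i-1}+W_{i-1}) \subseteq W_i$ directly; the two subgroup statements then fall out from the preceding lemma together with the evident termwise inclusion $W_i \subseteq S_i R_{i-1}$.

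To begin, fix $g \in W_i$ and write $g = h + \lambda Q_{i-1}$ with $h \in S_{i-1} R_{i-1}$ and $\lambda \in \F_p$. The summand $\lambda Q_{i-1}$ involves none of the $x_j$ (only the formal "time" parameter used to define the $Q$'s), so the substitution $x_j \mapsto x_j + w_j$ leaves it untouched. Hence it suffices to show that $h(x_1+w_1,\ldots,x_{i-1}+w_{i-1}) \in S_{i-1} R_{i-1}$ whenever $w_j \in W_j$ for each $j \leq i-1$.

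The key observation is that $W_j \subseteq S_{i-1} R_{i-1}$ for every $j \leq i-1$: the subring $S_{j-1} R_{j-1}$ is plainly contained in $S_{i-1} R_{i-1}$, and the extra generator $Q_{j-1}$ already lies in $S_{i-1}$ because $j-1 \leq i-2$. Consequently each $x_j + w_j$ belongs to the ring $S_{i-1} R_{i-1}$. Since $h$ is a polynomial in $x_1,\ldots,x_{i-1}$ with coefficients in $S_{i-1}$, substituting elements of the ring $S_{i-1} R_{i-1}$ for the $x_j$ produces an element of $S_{i-1} R_{i-1}$. This gives $g(x_1+w_1,\ldots,x_{i-1}+w_{i-1}) \in S_{i-1} R_{i-1} + \F_p Q_{i-1} = W_i$, establishing the closure condition.

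The preceding lemma then immediately promotes this to the statement that $\BB(W_1,\ldots,W_n)$ is a subgroup of $\BB_n(S_n)$. For the inclusion into $\BB(S_1 R_0,\ldots,S_n R_{n-1})$, it suffices to note that $W_i \subseteq S_i R_{i-1}$: the first summand is contained because $S_{i-1} \subseteq S_i$, and the second because $Q_{i-1} \in S_i$. I do not foresee a serious obstacle here; the argument is essentially bookkeeping, and the only point demanding care is the index shift showing $Q_{j-1} \in S_{i-1}$ for $j \leq i-1$, which is precisely what makes the "extra" generator of $W_j$ compatible with the coefficient ring describing $W_i$.
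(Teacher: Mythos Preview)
Your proof is correct and follows essentially the same route as the paper's own argument: decompose $g\in W_i$ as $h+\lambda Q_{i-1}$, observe that the $Q_{i-1}$ term is inert under the substitution, and use $x_j+w_j\in S_{i-1}R_{i-1}$ (for $j\le i-1$) to conclude that $h(x_1+w_1,\ldots,x_{i-1}+w_{i-1})\in S_{i-1}R_{i-1}$. If anything your version is slightly more careful, since you spell out why $Q_{j-1}\in S_{i-1}$ and explicitly verify the inclusion $W_i\subseteq S_iR_{i-1}$ needed for the final subgroup statements.
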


\begin{proof}
Let $g_i\in W_i$, i.e. $g_i=P(x_1,\ldots,x_{i-1})+\lambda Q_{i-1}$ where $P\in S_{i-1}R_{i-1}$. 
Let $h_j\in W_j$, then we need to prove that $P(x_1+h_1,\ldots,x_{i-1}+h_{i-1})+\lambda Q_i =g_i(x_1+h_1,\ldots,x_{i-1}+h_{i-1})\in W_i$. Now $x_j+h_j\in S_{j}R_{j-1}\subseteq S_{i-1}R_{i-1}$, and since $P\in S_{i-1}R_{i-1}$ we get 
$P(x_1+h_1,\ldots,x_{i-1}+h_{i-1})\in S_{i-1}R_{i-1}$ and we are done. 
\end{proof}

\subsection{$\Z$-flows of strictly triangular permutations}

\begin{theorem} \label{T4.4} Let $\sigma\in \BB_n(\F_p)$. Then\\
(1) there exists a $\Z$-flow $\sigma_T\in \BB(S_1R_0,S_2R_1,\ldots,S_nR_{n-1})$ of $\sigma$,\\
(2) $\sigma_T\in \BB(W_1,\ldots,W_n)$ where $W_i$ are as in lemma \ref{vierpuntvier}.
\end{theorem}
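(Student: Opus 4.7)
The plan is to induct on $n$. The base case $n=1$ is immediate: if $\sigma=(x_1+c)$ with $c\in\F_p$, then $\sigma_T:=(x_1+cQ_0)$ satisfies $\varphi_m(\sigma_T)=(x_1+cm)=\sigma^m$ and has increment $cQ_0\in\F_p Q_0\subseteq W_1$. For the inductive step write $\sigma=(\tilde\sigma,x_n+g_n)$ with $\tilde\sigma\in\BB_{n-1}(\F_p)$ and $g_n\in R_{n-1}$; by induction there is a $\Z$-flow $\tilde\sigma_T\in\BB(W_1,\ldots,W_{n-1})$ for $\tilde\sigma$. The goal is to construct an $n$-th increment $G_T$, living in $W_n=S_{n-1}R_{n-1}+\F_p Q_{n-1}$, so that $\sigma_T:=(\tilde\sigma_T,x_n+G_T)$ is a $\Z$-flow of $\sigma$.

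A direct expansion of $\sigma^m$ gives $\sigma^m=(\tilde\sigma^m,x_n+G_m)$ with
\[ G_m(x)=\sum_{k=0}^{m-1}g_n(\tilde\sigma^k(x)). \]
Form the ``integrand'' $h:=g_n\circ\tilde\sigma_T\in S_{n-1}R_{n-1}$; by construction $\varphi_k(h)(x)=g_n(\tilde\sigma^k(x))$. Expanding $h=\sum_{\alpha}c_\alpha(x)\,Q^\alpha$ over multi-indices $\alpha=(\alpha_0,\ldots,\alpha_{n-2})\in\{0,\ldots,p-1\}^{n-1}$ with $c_\alpha\in R_{n-1}$, we reduce to computing $S_\alpha(m):=\sum_{k=0}^{m-1}Q^\alpha(k)$ for each $\alpha$ and then setting $G_T:=\sum_\alpha c_\alpha(x)\,S_\alpha$.

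The key computation for $S_\alpha$ exploits that $Q^\alpha$ depends only on the first $n-1$ base-$p$ digits of its argument and is therefore $p^{n-1}$-periodic. A product of power sums over $\F_p$ gives
\[ \sum_{k=0}^{p^{n-1}-1}Q^\alpha(k)=\prod_{i=0}^{n-2}\sum_{k_i=0}^{p-1}k_i^{\alpha_i}=(-1)^{n-1}\,\delta_{\alpha,\bar\alpha}, \]
where $\bar\alpha:=(p-1,\ldots,p-1)$. Writing $m=a+bp^{n-1}$ with $0\le a<p^{n-1}$ and $b=Q_{n-1}(m)$, one obtains
\[ S_\alpha(m)=\sum_{k=0}^{a-1}Q^\alpha(k)+(-1)^{n-1}\,\delta_{\alpha,\bar\alpha}\,Q_{n-1}(m). \]
The partial sum depends only on $a\bmod p^{n-1}$, so by Corollary \ref{P5} applied in $n-1$ variables it lies in $S_{n-1}$. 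Thus $S_\alpha\in S_{n-1}$ for $\alpha\neq\bar\alpha$ and $S_{\bar\alpha}\in S_{n-1}+(-1)^{n-1}Q_{n-1}$, so $G_T\in S_nR_{n-1}$; this already proves (1). The only term that could prevent $G_T$ from lying in $W_n$ is the coefficient of $Q_{n-1}$, which equals $(-1)^{n-1}c_{\bar\alpha}(x)$.

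The main obstacle is then to show that $c_{\bar\alpha}(x)$ is a constant in $\F_p$, independent of $x$. Specializing the summation lemma at $m=p^{n-1}$ yields $G_{p^{n-1}}(x)=(-1)^{n-1}c_{\bar\alpha}(x)$, so it suffices to prove $G_{p^{n-1}}$ is constant. Fix $x\in\F_p^{n-1}$ and let $d=p^s$ be the length of its $\tilde\sigma$-orbit; by Lemma \ref{simpel}(v) $d$ divides $p^{n-1}$. Grouping the sum into $p^{n-1}/d$ repetitions of the orbit sum,
\[ G_{p^{n-1}}(x)=p^{n-1-s}\sum_{j=0}^{d-1}g_n(\tilde\sigma^j(x)), \]
so $G_{p^{n-1}}(x)=0$ whenever $s<n-1$. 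On the other hand, any orbit of length $p^{n-1}$ must be the unique orbit since $|\F_p^{n-1}|=p^{n-1}$, in which case $\tilde\sigma$ is of maximal orbit and $G_{p^{n-1}}(x)=\sum_{z\in\F_p^{n-1}}g_n(z)$ for every $x$. Either way $G_{p^{n-1}}$ is a constant in $\F_p$, hence $c_{\bar\alpha}\in\F_p$, $G_T\in W_n$, and $\sigma_T:=(\tilde\sigma_T,x_n+G_T)\in\BB(W_1,\ldots,W_n)$ is the desired $\Z$-flow, proving both (1) and (2). The delicate step is the orbit dichotomy at the end: it is what forces the $Q_{n-1}$-coefficient into $\F_p$ rather than merely $R_{n-1}$, and I do not see a way to bypass the maximal/non-maximal split.
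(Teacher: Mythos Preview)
Your proof is correct and takes a genuinely different route from the paper's.

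For part (1), the paper constructs the last increment by Lagrange interpolation over $\Q$ at the $p^n$ nodes $0,\ldots,p^n-1$, then invokes the integer-valued-polynomial machinery (Lemma~\ref{L1} and Proposition~\ref{P3}) to rewrite each coefficient in $\Z[Q_0,\ldots,Q_{n-1}]$ before reducing mod~$p$. You instead exploit the telescoping identity $G_m=\sum_{k<m}g_n\circ\tilde\sigma^k$ and feed in the inductive flow $\tilde\sigma_T$ to obtain $h=g_n\circ\tilde\sigma_T\in S_{n-1}R_{n-1}$ directly; the construction then reduces to the elementary summation $\sum_{k<m}Q^\alpha(k)$, which you evaluate by the classical power-sum identity $\sum_{k\in\F_p}k^a=-\delta_{a,p-1}$ and a digit decomposition. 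Your approach bypasses the whole $\Int(\Z)$ detour and makes the appearance of $Q_{n-1}$ (with coefficient $(-1)^{n-1}c_{\bar\alpha}$) completely explicit.

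For part (2), the paper appeals to Theorem~\ref{T3.2} to identify $\sigma^{p^{n-1}}$; note that Theorem~\ref{T3.2} is stated only for \emph{maximal-orbit} $\sigma$, so the paper's citation is at best implicit for general $\sigma$. Your orbit dichotomy (if the $\tilde\sigma$-orbit of $x$ has length $p^s<p^{n-1}$ the sum is killed by the factor $p^{n-1-s}$; otherwise $\tilde\sigma$ is transitive and the sum is $\sum_{z\in\F_p^{n-1}}g_n(z)$) handles both cases uniformly and is entirely self-contained. In fact your computation also recovers the constant as $(-1)^{n-1}$ times the coefficient of $(x_1\cdots x_{n-1})^{p-1}$ in $g_n$ when $\tilde\sigma$ is of maximal orbit, matching Theorem~\ref{T3.2}. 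The trade-off is that the paper's interpolation argument is shorter to state, while your argument is more informative about the structure of $\sigma_T$ and avoids external input.
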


\begin{proof}
We use induction on $n$. For $n=1$, $\sigma=(x_1+a)$ where $a\in \F_p$, , and we can take $\sigma_T:=(x_1+Ta)\in x_1+R_0S_0+\F_pQ_0$. \\
Let $\sigma=(\tilde{\sigma}, x_n+g_n)\in \BB_n(\F_p)$. We know that we can find $\tilde{\sigma}_T\in \BB(W_1,\ldots,W_{n-1})$ such that $\sigma^m=(\tilde{\sigma}_m, x_n+h_m)$ where $h_m\in R_{n-1}$.
Now pick $H_m\in \Z[x_2,\ldots,x_n]$ such that $H_m\mod{p}=h_m$. 
Define 
\[ M_i(T):=\prod_{j=0, j\not = i }^{p^n-1} \frac{(T-j)}{i-j}  \]
and define $G(T):=M_0H_0+M_1H_1+\ldots+M_{p^n-1}H_{p^n-1}$. Note that $G(T)$ is of degree $p^n-1$ in $T$. Note that $G(i)=H_i$, and
$G(T)\in \Q[T][x_1,\ldots,x_n]$. Thus, if $c(T)$ is one of the coefficients in $\Q[T]$, then $c(\{0,1,\ldots, p^n-1\})\subset \Z$. 
Using lemma \ref{L1} we get that $c(\Z)\subset \Z$. 
Using proposition \ref{P3} we can replace each coefficient $c(T)\in \Q[T]$ by an equivalent element 
in $\Z[Q_0,Q_1,\ldots, Q_{n-1}]$ (as $[\log_p(p^n-1)]=n-1$), so we can assume that $G_T\in \Z[Q_0,\ldots, Q_{n-1}][x_1,\ldots,x_n]$.
Thus define $g_T\in \F_p[Q_0,\ldots,Q_{n-1}][x_1,\ldots,x_{n-1}]=S_{n}R_{n-1}$ as the image of $G_T$, and now we can define
\[ \sigma_T:=(\tilde{\sigma}_T, x_n+g_T) \]
and thus $\sigma_m=( \tilde{\sigma}_m, x_n+g_m)=( \tilde{\sigma}_m,x_n+h_m)=\sigma^m$, which is what is required.

Left to prove is that $g_T\in \F_pQ_{n-1}+S_{n-1}R_{n-1}$ (where we only have $g_T\in S_{n}R_{n-1}$ so far). Note that $\sigma^{p^{n-1}}(\tilde{\alpha},\alpha_n)=
(\tilde{\alpha}, \alpha_n+(-1)^{n-1}a)$ where $a$ is the coefficient of $(x_1\cdots x_{n-1})^{p-1}$ in $x_n+g_n$ (see theorem \ref{T3.2}). 
This means that $\sigma^m=(x_1+(-1)^{n-1}a, x_2,\ldots,x_n)$ if 
$p^{n-1}$ divides $m$. Write $\lambda=a(-1)^{n-1}\in \F_p$, then 
$g_{mp^{n-1}}=m\lambda$. 
Now define $h_T:=g_T-Q_{n-1}(T)\lambda$. Then $h_{p^{n-1}}=0$, and thus 
$h_T$ does not depend on $Q_{n-1}$ (which has order $p^n$). Thus, 
$h_T\in S_{n-1}R_{n-1}$ and $g_T\in S_{n-1}R_{n-1}+\F_p Q_{n-1}=W_n$. 
\end{proof}

It might be that this theorem can be improved, in the sense that the $W_i$ can be chosen smaller. This comes down to the following question:

\begin{question} \label{vraagvierpuntacht} Find $W_1,\ldots,W_n$ such that
\[ \BB(W_1,W_2,\ldots, W_n)= \left<\sigma_T ~|~\sigma \in \BB_n(\F_p) \right>.\]
\end{question}

We denote $t:=Q_0$, thus $\F_p[t]:=\F_p[T]/(T^p-T)$.

\begin{theorem}\label{Tt1}
Let $\sigma\in \BB_n(\F_p)$. Then there exist 
\[ \sigma_{i,T}\in
\BB(\F_p t, R_{i+1}[t], R_{i+2}[t],\ldots,R_{n-1}[t]) \subset 
 \BB_n(\F_p[t])\] for $0\leq i\leq n-1$ such that
$\sigma^{p^i m}=\varphi_m(\sigma_{i,T})$ for each $0\leq m\leq p-1$. 
\end{theorem}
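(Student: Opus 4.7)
The plan is to prove Theorem \ref{Tt1} by first establishing a strengthening of Theorem \ref{T3.2} valid for all $\tau \in \BB_N(\F_p)$ (not only the maximal-orbit case), and then Lagrange-interpolating the iterates $\sigma^{p^i m}$ in the single variable $t=Q_0$. \textbf{Key Lemma:} for any $\tau=(x_1+g_1,\ldots,x_N+g_N) \in \BB_N(\F_p)$, the $N$-th coordinate of $\tau^{p^{N-1}}$ is $x_N+c$ for some $c \in \F_p$. To prove this, let $\bar\tau \in \BB_{N-1}(\F_p)$ be the projection onto the first $N-1$ coordinates; by strict triangularity the $N$-th coordinate of $\tau^{p^{N-1}}(v)$ equals $v_N + S(w)$, where $w=v|_{N-1}$ and $S(w) = \sum_{j=0}^{p^{N-1}-1} g_N(\bar\tau^j(w))$. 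By Lemma \ref{simpel} parts (v)--(vi), the $\bar\tau$-orbit of $w$ has length $\ell$ a power of $p$ dividing $p^{N-1}$, and the sum traverses the orbit $p^{N-1}/\ell$ times. If $\ell < p^{N-1}$ then $p \mid p^{N-1}/\ell$, so $S(w)=0$; if $\ell = p^{N-1}$ then the orbit equals all of $\F_p^{N-1}$ and $S(w) = \sum_{w' \in \F_p^{N-1}} g_N(w') = (-1)^{N-1} a$, where $a$ is the coefficient of $x_1^{p-1} \cdots x_{N-1}^{p-1}$ in $g_N$ (using $\sum_{x \in \F_p} x^k = -\delta_{k,p-1}$ for $0 \le k \le p-1$). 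Either way $S(w)$ is a constant in $\F_p$.

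Next I apply the Key Lemma to $\pi_{i+1}(\sigma) \in \BB_{i+1}(\F_p)$: since the first $i+1$ coordinates of $\sigma^m$ coincide with $\pi_{i+1}(\sigma)^m$ by strict triangularity, the $(i+1)$-th coordinate of $\sigma^{p^i}$ equals $x_{i+1}+c$ for some $c \in \F_p$. By iterated Lemma \ref{simpel}(iv), $\sigma^{p^i} \in \BB_{n-i}(R_i)$, so its first $i$ coordinates are trivial. Hence for $0 \le m \le p-1$,
\[ \sigma^{p^i m} \;=\; (x_1,\ldots,x_i,\; x_{i+1}+mc,\; x_{i+2}+h_{i+2}^{(m)},\;\ldots,\; x_n + h_n^{(m)}) \]
with $h_j^{(m)} \in R_{j-1}$ for $j \ge i+2$. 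Setting $L_m(t) := \prod_{k \neq m} (t-k)/(m-k) \in \F_p[t]$, I define $g_j(t) := \sum_{m=0}^{p-1} L_m(t)\, h_j^{(m)} \in R_{j-1}[t]$ for $j \ge i+2$, and I take the $(i+1)$-th perturbation to be $ct \in \F_p t$ (which equals $\sum_{m=0}^{p-1} L_m(t)\cdot mc$ by uniqueness of the degree-$<p$ interpolant through $(m,mc)$). Then
\[ \sigma_{i,T} \;:=\; (x_1,\ldots,x_i,\; x_{i+1}+ct,\; x_{i+2}+g_{i+2}(t),\;\ldots,\; x_n+g_n(t)) \]
satisfies $\varphi_m(\sigma_{i,T}) = \sigma^{p^i m}$ for all $0 \le m \le p-1$ by substituting $t=m$, and manifestly lies in $\BB(\F_p t, R_{i+1}[t],\ldots,R_{n-1}[t]) \subset \BB_n(\F_p[t])$.

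The substantive obstacle is the Key Lemma: it is the refinement of Theorem \ref{T3.2} saying that $\tau^{p^{N-1}}$ always translates the last coordinate by a constant in $\F_p$ — either by $(-1)^{N-1}a$ (when $\bar\tau$ is of maximal orbit) or by $0$ (otherwise). Once this refinement is established, Step 3 is mechanical, and a small additional check modelled on Lemma \ref{vierpuntvier} shows that $\BB(\F_p t, R_{i+1}[t],\ldots,R_{n-1}[t])$ is genuinely a subgroup: the only closure condition that is not automatic is that the slot $\F_p t$, being $\F_p$-valued and independent of $x_1,\ldots,x_i$, is preserved under precomposition by elements of the group.
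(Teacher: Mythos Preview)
Your argument is correct and follows the same route as the paper: pass to $\tau:=\sigma^{p^i}\in\BB_{n-i}(R_i)$ via Lemma~\ref{simpel}(iv) and Lagrange-interpolate $\tau^0,\tau^1,\ldots,\tau^{p-1}$ in the single variable $t$ (this is the paper's Lemma~\ref{Lt1}, which the paper then feeds into the proof of Theorem~\ref{Tt1}).

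Your Key Lemma is the one place where you are more careful than the paper. Lemma~\ref{Lt1} is stated for an arbitrary element of $\BB_{n-i}(R_i)$ and asserts that the interpolant lands in $\BB(\F_p t, R_{i+1}[t],\ldots,R_{n-1}[t])$; but for a generic element of $\BB_{n-i}(R_i)$ the $(i{+}1)$-th perturbation lies only in $R_i$, so the interpolated first slot falls in $R_i\cdot t$ rather than $\F_p t$. What is actually needed---and what the paper uses without justification (the line ``$\tau\in\B_{n-i}(\F_p)$'' in the proof of Theorem~\ref{Tt1}, and the bare assertion that $\lambda\in\F_p$ in the proof of Lemma~\ref{Lt1})---is that for the \emph{specific} $\tau=\sigma^{p^i}$ with $\sigma\in\BB_n(\F_p)$, the $(i{+}1)$-th perturbation is a constant in $\F_p$. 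Your orbit-sum argument proves precisely this, extending Theorem~\ref{T3.2} from the maximal-orbit case to all $\sigma$ (constant $(-1)^{N-1}a$ when $\bar\tau$ has a single orbit, constant $0$ otherwise). So the approaches coincide, but your write-up is the more complete one.
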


\begin{proof}
Lemma \ref{Lt1} gives the case $i=0$. Defining $\tau:=\sigma^{p^i}$, then $\tau\in \B_{n-i}(\F_p)$, so we can apply lemma \ref{Lt1} to $\tau$ to find $\tau_{0,T}$; now define $\sigma_{i,T}:=\tau_{0,T}$, and $\sigma^{p^i m}=\tau^m=\varphi_m(\tau_{0,T})=\varphi_m(\tau_{i,T})$ for each $0\leq m\leq p-1$.
\end{proof}

\begin{lemma} \label{Lt1}Let $\sigma\in \BB_{n-i}(R_i)$. Then there exists 
\[ \sigma_{i,T}\in
\BB(\F_p t, R_{i+1}[t], R_{i+2}[t],\ldots,R_{n-1}[t]) \]
such that $\sigma^{m}=\sigma_{i,m}$ for each $0\leq m\leq p-1$. 
\end{lemma}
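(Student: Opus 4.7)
The plan is Lagrange interpolation in $t$: since only the $p$ values $m\in\{0,1,\dots,p-1\}$ need to be matched and $\F_p[t]=\F_p[T]/(T^p-T)$ faithfully parametrises $\Maps(\F_p,\F_p)$, I can interpolate each coordinate of $\sigma^m$ coefficient-wise in $t$.

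Because $\sigma\in\BB_{n-i}(R_i)$ fixes $x_1,\dots,x_i$, each iterate does too, so I can write
\[ \sigma^m=(x_1,\dots,x_i,\;x_{i+1}+h_{i+1}^{(m)},\dots,x_n+h_n^{(m)}),\qquad h_j^{(m)}\in R_{j-1}. \]
Introduce the Lagrange basis $L_m(t):=\prod_{\ell\in\F_p\setminus\{m\}}(t-\ell)(m-\ell)^{-1}\in\F_p[t]$, which satisfies $L_m(k)=\delta_{mk}$ in $\F_p$. Define
\[ H_j(t):=\sum_{m=0}^{p-1}h_j^{(m)}L_m(t),\qquad \sigma_{i,T}:=(x_1,\dots,x_i,\;x_{i+1}+H_{i+1},\dots,x_n+H_n). \]
Then $\varphi_k(\sigma_{i,T})=\sigma^k$ for every $k\in\{0,\dots,p-1\}$, which is the equality the lemma demands.

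The real content is the slot containment. For $j\ge i+2$ it is immediate: $h_j^{(m)}\in R_{j-1}$ and $L_m(t)\in\F_p[t]$ together give $H_j\in R_{j-1}[t]$, the designated slot. For the first active coordinate $j=i+1$, the invariance of $x_1,\dots,x_i$ under $\sigma$ lets iterates telescope to $h_{i+1}^{(m)}=m\,g_{i+1}$; substituting into the Lagrange sum and using $\sum_{m=0}^{p-1}m\,L_m(t)=t$ (the Lagrange form of the identity function $\F_p\to\F_p$) one obtains $H_{i+1}(t)=g_{i+1}\cdot t$.

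The main obstacle is verifying the narrow first-slot containment $H_{i+1}=g_{i+1}\cdot t\in\F_p\cdot t$, equivalently $g_{i+1}\in\F_p$. For $i=0$ this is automatic because $R_0=\F_p$. For $i\ge 1$ the interpolation lands a priori only in the larger space $R_i\cdot t$, so the narrow containment must be extracted from structural input. I would complete the proof by induction on $n-i$, combining the quotient filtration $\BB_{n-m}(R_m)/\BB_{n-m-1}(R_{m+1})\cong\langle R_m,+\rangle$ from lemma \ref{simpel} with the averaging identity $\sum_{k=0}^{p-1}k^j\equiv -\delta_{j,p-1}\pmod p$, which degenerates the first-active coefficient to a scalar at each inductive step and restores the $\F_p\cdot t$ containment.
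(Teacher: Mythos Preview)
Your Lagrange-interpolation construction is exactly the paper's: it defines $M_m(t)=\prod_{j\neq m}(t-j)/(m-j)$ and sets $\sigma_{i,T}=\sum_{m=0}^{p-1}M_m\,\sigma^m$, then argues the first active component has the form $x_{i+1}+\lambda t$ with $\lambda\in\F_p$ because ``the first component of $\sigma$ is $x_1+\lambda$ for some $\lambda$''. Your computation $H_{i+1}=g_{i+1}\cdot t$ and your slot checks for $j\ge i+2$ are correct and match this.

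Where your proposal diverges is in trying to \emph{repair} the first-slot containment for $i\ge1$ by an inductive/averaging argument. That cannot work, because the claim you are trying to prove is false for general $\sigma\in\BB_{n-i}(R_i)$. Take $n=2$, $i=1$, $\sigma=(x_1,\,x_2+x_1)\in\BB_1(R_1)$; then $\sigma^m=(x_1,\,x_2+mx_1)$ and the unique interpolant is $(x_1,\,x_2+t\,x_1)$, whose second slot $x_1t$ lies in $R_1t$ but not in $\F_pt$. No filtration or power-sum identity will turn $x_1$ into a scalar. The paper's own proof does not address this either: it is written entirely in the case $i=0$ (it speaks of ``the first component $x_1+\lambda$'' and names the result $\sigma_{0,T}$), where $R_0=\F_p$ makes the containment automatic. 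So your core argument is the paper's argument; the additional inductive patch you sketch for $i\ge1$ is aimed at a statement that, as literally written, does not hold---the first slot should read $R_it$ rather than $\F_pt$, or the lemma should be read as the $i=0$ case only.
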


\begin{proof}
Let $M_i(t):=\prod_{j=0, j\not = i}^{p-1} \frac{t-j}{i-j}$.
Then define $\sigma_{0,T}=\sum_{i=0}^{p-1}M_i f^i$. It is now clear that $\sigma_{0,T}\in \BB_n(\F_p[t])$, one only needs to see that 
the first component is of the form $x_1+t\lambda$ for some $\lambda \in \F_p$. 
But since the first component of $\sigma$ is $x_1+\lambda$ for some $\lambda$, and thus $\sigma^m$ has $x_1+m\lambda$ as first component, this is exactly the case. 
\end{proof}

%\subsection{Initialisation phase}

\end{document}